\renewcommand\theequation{\thesection.\arabic{equation}}
\newcommand{\BA}{{\mathbb {A}}}
\newcommand{\BC}{{\mathbb {C}}}
\newcommand{\BQ}{{\mathbb {Q}}}
\newcommand{\BR}{{\mathbb {R}}}
\newcommand{\BZ}{{\mathbb {Z}}}
\newcommand{\CA}{{\mathcal {A}}}
\newcommand{\CB}{{\mathcal {B}}}
\newcommand{\CC}{{\mathcal {C}}}
\newcommand{\CE}{{\mathcal {E}}}
\newcommand{\CF}{{\mathcal {F}}}
\newcommand{\CH}{{\mathcal {H}}}
\newcommand{\CL}{{\mathcal {L}}}
\newcommand{\CM}{{\mathcal {M}}}
\newcommand{\CN}{{\mathcal {N}}}
\newcommand{\CO}{{\mathcal {O}}}
\newcommand{\CR}{{\mathcal {R}}}
\newcommand{\CW}{{\mathcal {W}}}
\newcommand{\Fa}{{\mathfrak {a}}}
\newcommand{\Fl}{{\mathfrak {l}}}
\newcommand{\Fm}{{\mathfrak {m}}}
\newcommand{\Fo}{{\mathfrak {o}}}
\newcommand{\Fp}{{\mathfrak {p}}}
\newcommand{\Fq}{{\mathfrak {q}}}
\newcommand{\RA}{{\mathrm {A}}}
\newcommand{\RB}{{\mathrm {B}}}
\newcommand{\RG}{{\mathrm {G}}}
\newcommand{\RK}{{\mathrm {K}}}
\newcommand{\RN}{{\mathrm {N}}}
\newcommand{\RX}{{\mathrm {X}}}
\newcommand{\RZ}{{\mathrm {Z}}}
\newcommand{\GL}{{\mathrm{GL}}}
\newcommand{\PGL}{{\mathrm{PGL}}}
\renewcommand{\Re}{{\mathrm{Re}}}
\newcommand{\SL}{{\mathrm{SL}}}
\newcommand{\bs}{\backslash}
\newcommand{\bqa}{\begin{equation}}
\newcommand{\eqa}{\end{equation}}
\newcommand{\bea}{\begin{eqnarray}}
\newcommand{\eea}{\end{eqnarray}}
\newcommand{\bna}{\begin{eqnarray*}}
\newcommand{\ena}{\end{eqnarray*}}
\newcommand{\bma}{\begin{pmatrix}}
\newcommand{\ema}{\end{pmatrix}}
\newtheorem{thm}{Theorem}[section]
\newtheorem{prop}[thm]{Proposition}
\newtheorem {ques/conj}[thm]{Question/Conjecture}
\newtheorem{rmk}[thm]{Remark}
\newcommand{\Rmnum}[1]{\expandafter\@slowromancap\romannumeral #1@}
\begin{document}
\renewcommand{\theequation}{\arabic{equation}}
\numberwithin{equation}{section}

\title{Spectral Reciprocity for the product of Rankin-Selberg $L$-functions}

\author{Xinchen Miao}
\address{School of Mathematics\\
University of Minnesota\\
Minneapolis, MN 55455, USA}
\email{miao0011@umn.edu}

\date{October, 2021}

\begin{abstract}
We prove a new case of spectral reciprocity formulae for the product of $\GL(n+1) \times \GL(n)$ and $\GL(n) \times \GL(n-1)$ Rankin-Selberg $L$-functions ($n \geq 3$), which are first developed by Blomer and Khan in \cite{BK17} for degree 8 $L$-functions ($n=2$ case, the product of $\GL(3) \times \GL(2)$ and $\GL(2) \times \GL(1)$ Rankin-Selberg $L$-functions). Our result can be viewed as a generalization of Blomer and Khan's work to higher rank case. We will mainly follow the method developed in \cite{Nun20}. We will use the integral representations of Rankin-Selberg $L$-functions generalized by Ichino and Yamana \cite{IY15}, spectral theory of $L^2$ space and the language of automorphic representations.

\end{abstract}

\maketitle

\tableofcontents

\section{Introduction}

In the recent five years, people have shown a lot of interests in studying so called automorphic spectral reciprocity formulae (For example, \cite{AK18} \cite{BK17} \cite{BK18} \cite{BML19} \cite{Nel19} \cite{Nun20} \cite{Zac19} \cite{Zac20} etc.), which we mean an identity roughly of the shape as follows:
$$ \sum_{\pi \in \CF} \CL(\pi)\CH(\pi)= \sum_{\pi \in \tilde{\CF}} \widetilde{\CL}(\pi) \widetilde{\CH}(\pi),$$
where $\CF$ and $\widetilde{\CF}$ are different families of automorphic representations, $\CL(\pi)$ and $\widetilde{\CL}(\pi)$ are certain (may different) (products of) $L$-functions corresponding to automorphic representations (see \cite{Mot93}). Moreover, $\CH$ and $\widetilde{\CH}$ are some (global) weight functions related to automorphic representation $\pi$. The map from $\CH$ to $\widetilde{\CH}$ is given by an explicit integral transform.

Besides the beauty of spectral reciprocity formulae, these kind of formulae always have powerful applications to non-vanishing and subconvexity problems for the associated $L$-functions, which is very important in analytic number theory. This gives people motivations to understand the intrinsical symmetry behind such kind of identities. In other word, people will ask whether there exists a master formula to cover all these reciprocity formulae related to $L$-functions. Moreover, there are several works on automorphic $L$-functions on which one can find such spectral reciprocity formulae hidden as the key ingredients inside the proof (For example, see \cite{KY21} and \cite{Kha21}).

This kind of spectral reciprocity formula in higher rank (degree 8 $L$-function) first appeared in a paper by Blomer, Miller and Li \cite{BML19}. They established a spectral reciprocity formula for $\GL(4) \times \GL(2)$ Rankin-Selberg $L$-functions in the $\GL(2)$ archimedean (spectral) aspect. Later, Blomer and Khan's work in \cite{BK17} shows another kind of interesting and deep spectral reciprocity formula which contain the product of $\GL(3) \times \GL(2)$ Rankin-Selberg $L$-functions and the standard $\GL(2)$ $L$-functions. This is a degree 8 $L$-function case again. In \cite{BK17}, Blomer and Khan used powerful analytic number theory tools to go further. They showed that if the input of $\GL(3)$ automorphic form is an Eisenstein series other than a cusp form, the spectral reciprocity formula is still true. Therefore, they may successfully to apply this kind of spectral reciprocity formula to study the $\GL(2)$ subconvexity problem in level aspect. However, such kind of spectral reciprocity identities appeared at least since Motohashi's formula \cite{Mot93} which connects the fourth moment of the Riemann zeta-function ($\GL(1)$ $L$-functions) to the cubic moment of standard $L$-functions of cusp forms on $\GL(2)$.

Now we briefly describe the main result in Blomer and Khan's paper. They roughly say the following:

Let $F$ be an automorphic form (cusp form or Eisenstein series) for the group $\SL_3(\BZ)$. In the following summations, we let $f$ be a cusp form for a congruence subgroup of $\SL_2(\BZ)$ ($f$ is ramifed at some finite places). They proved a spectral reciprocity formula of the following shape
$$\sum_{f \text{ of level } q} L(s, F \times f) L(w, f) \lambda_f(\ell) \rightsquigarrow \sum_{f \text{ of level } \ell} L(s', F \times f) L(w', f) \lambda_f(q),$$
where $q$ and $\ell$ are coprime integers and $\lambda_f$ is the corresponding Hecke eigenvalue for the cusp form $f$ and 
\begin{equation}
\textstyle  s' = \frac{1}{2}(1+w-s), \quad w' = \frac{1}{2}(3s+w-1).
\end{equation}

Furthermore, if $F$ is an Eisenstein series, we will also see that
$$\sum_{f \text{ of level } q}   \vert L(1/2, f)  \rvert^4 \lambda_f(\ell) \rightsquigarrow \sum_{f \text{ of level } \ell}  \vert L(1/2, f) \rvert^4 \lambda_f(q).$$

In 2020, Nunes \cite{Nun20} gave a new and nice proof of Blomer-Khan's result \cite{BK17} and generalized their result to general number field instead of $\BQ$. Instead of tools in the analytic number theory (such as Kuznetsov trace formula, Voronoi Summation formula etc.), his main method is the spectral decomposition formula and the integral representation of Rankin-Selberg $L$-functions. The reciprocity of two coprime ideal $\Fp$ and $\Fl$ are given by the action of a Weyl element. We will follow Nunes' method. Applying integral representations of Rankin-Selberg $L$-functions generalized by Ichino and Yamana \cite{IY15}, spectral decomposition theory and the language of automorphic representations, we will generalize Blomer-Khan and Nunes' result to arbitrary higher rank groups. We will establish a spectral reciprocity formula for the product of $\GL(n+1) \times \GL(n)$ and $\GL(n) \times \GL(n-1)$ ($n \geq 3$) Rankin-Selberg $L$-functions, which is a degree $2n^2$ $L$-functions. Note that if $n=2$, the result has already been established in \cite[Theorem 1.1, 1.2]{Nun20}. Roughly speaking, we will prove a spectral reciprocity formula of the shape:
$$\sum_{\pi \text{ of level } \Fq} L(s, \Pi \times \widetilde{\pi} ) L(w, \pi \times \pi_1) H(\pi) \rightsquigarrow \sum_{\pi \text{ of level } \Fl} L(s', \Pi \times \widetilde{\pi}) L(w', \pi \times \pi_1) \widetilde{H}(\pi)$$
for different unramified prime ideal $\Fq$ and $\Fl$, where $\widetilde{\pi}$ is the contragredient representation of $\pi$.
Here both $\Pi$ and $\pi_1$ are cuspidal automorphic forms on $\GL(n+1)$ and $\GL(n-1)$ which are unramified everywhere and have trivial central character. Note that the automorphic representation $\pi$ on $\GL(n)$ also have trivial central character. Moreover, we have
$$s' = \frac{1+(n-1)w-s}{n}, \quad w' = \frac{(n+1)s+w-1}{n}.$$
For the rigorous statement of above spectral reciprocity formula, the readers can see Theorem \ref{spectral-reciprocity}.

This kind of spectral reciprocity formula should have some applications on the simultaneous nonvanishing problems of certain Rankin-Selberg $L$-functions \cite[Corollary 1.3]{Nun20}. However, this need a careful choice of local vectors and estimations. Moreover, we have to bound all the error terms which contain the moment of higher rank $L$-functions. We hope to go back to this kind of applications in the future.

\section{Some notations and preliminaries}

We will mainly follow the notations and preliminaries in \cite{Ja20}, \cite{Nun20}, and \cite{Zac19}.

\subsection{Number Fields and Local Fields}

Let $F/\mathbb{Q}$ be a fixed number field with ring of intergers $\CO_F$ and discriminant $\Delta_F$.

For a place $v$ of $F$, we let $F_v$ be a local field which is the completion of $F$ at the place $v$. If $v$ is non-Archimedean, we write $\Fo_v$ for the ring of integers in $F_v$ with maximal ideal $\Fm_v$ and uniformizer $\varpi_v$. The cardinality of the residue field is $p_v:=\vert \Fo_v /\Fm_v \rvert$. For $s\in\BC$, we define the local zeta function $\zeta_{F_v}(s)$ to be $(1-p_v^{-s})^{-1}$ if $v$ is a finite place; $\zeta_{F_v}(s)=\pi^{-s/2}\Gamma(s/2)$ if $v$ is real and $\zeta_{F_v}(s)=2(2\pi)^{-s}\Gamma(s)$ if $v$ is complex.

The adele ring of $F$ is denoted by $\BA_F$ and its unit group is given by $\BA^\times_F$ (idele group). We also let $\BA^1_F:=\{ x\in \BA_F^\times \ : \ |x|=1\}$, where $|\cdot| : \BA_F^\times \rightarrow \BR_{>0}$ is the adelic norm map. Note that $\BA_F^1$ is exactly the kernel of the adelic norm map. We also call $\BA^1_F$ norm one ideles.

We fix $\psi = \prod_v \psi_v$ be the additive character with the form as $\psi_\BQ \circ \text{Tr}_{F/\BQ},$ where $\text{Tr}_{F/\BQ}$ is the trace map and $\psi_\BQ$ is the standard additive character on $\BQ \setminus \BA_\BQ$. For $v<\infty$, we let $d_v$ be the conductor of additive character $\psi_v$, which is the smallest non-negative integer such that $\psi_v$ is trivial on $\Fm_v^{d_v}$. In this case, we will have $\Delta_F=\prod_{v<\infty}q_v^{d_v}$. We may also set $d_v=0$ when $v$ is Archimedean.

\subsection{Subgroups of $\GL(n)$ and Measure}

Now we consider some subgroups of $\GL(n)$.

Let $\RG=\GL(n)$. If $R$ is a field, by definition, $\RG(R)$ is the group of $n \times n$ matrices with coefficients in $R$ and determinant in $R-\{0\}$. We also define the following standard and also important subgroups
$$ \RZ_n(R):= \left\{ \begin{pmatrix} u &&& \\ & u && \\ && \cdots & \\ &&& u \end{pmatrix}, \; \; u \in R^* \right \},$$
$$ \RN_n(R):= \left\{ \begin{pmatrix} 1 & x_{1,2} & x_{1,3} & \cdots & x_{1,n} \\ & 1 & x_{2,3} & \cdots & x_{2,n} \\ && \cdots & \cdots & \cdots \\ &&& 1 & x_{n-1,n} \\ &&&& 1 \end{pmatrix},\;\; x_{i,j} \in R,\; 1 \leq i<j \leq n \right \},$$
$$ \RA_n(R):= \left\{ \begin{pmatrix} y_{n-1} \cdots y_1 &&&& \\ & y_{n-2}\cdots y_1 &&& \\ && \cdots && \\ &&& y_1 & \\ &&&& 1 \end{pmatrix}, \;\; y_i \in R^*, \; 1 \leq i \leq n-1 \right \},$$
and the Borel subgroup
$$ \RB_n(R):=  \RZ_n(R)\RN_n(R)\RA_n(R).$$
We let $W$ be the Weyl group of $\RG$. We write
$$ w_0:= \begin{pmatrix} &&&& 1 \\ &&& 1 & \\ && \cdots && \\ & 1 &&& \\ 1 &&&& \end{pmatrix} $$
be the longest Weyl element.

Moreover, for any place $v$, we let $\RK_v$ be the maximal compact subgroup of $\RG(F_v)$ which is defined by
\begin{equation}\label{Compact}
\RK_v= \left\{ \begin{array}{lcl}
\RG(\Fo_v) & \text{if} & v \ \mathrm{is \ finite} \\
 & & \\
\mathrm{O}_n(\BR) & \text{if} & v \ \mathrm{is \ real} \\
 & & \\
\mathrm{U}_n(\mathbb{C}) & \text{if} & v \ \mathrm{is \ complex}.
\end{array}\right.
\end{equation}
We also let $\RK:= \prod_v \RK_v$. If $v<\infty$ and $n\geqslant 0$, we define the congruence subgroup \cite{JPS81} by (This will be used in the computation of local vectors and is useful in the local new-vector theory)
$$\RK_{v,0}(\varpi_v^n)= \left\{ \begin{pmatrix} k_{1,1} & k_{1,2} & \cdots & k_{1,n} \\ k_{2,1} & k_{2,2} & \cdots & k_{2,n} \\ \cdots & \cdots & \cdots & \cdots \\ k_{n,1} & k_{n,2} & \cdots & k_{n,n} \end{pmatrix} \in \RK_v, \; \;  k_{n,i} \in \Fm_v^n,\; 1 \leq i \leq n-1 \right\}.$$
If $\Fa$ is an ideal of $\CO_F$ with prime decomposition $\Fa=\prod_{v<\infty}\Fp_v^{f_v(\Fa)}$ where $\Fp_v$ is the unique prime (also maximal) ideal corresponding to the finite place $v$, then we define
$$\RK_0(\Fa):=\prod_{v<\infty} \RK_{v,0}(\varpi_v^{f_v(\Fa)}) \subseteq K \subseteq \GL_n(\BA_F).$$
Now we have to normalize the measures we need.

At each place $v$, $dx_v$ denotes a self-dual measure on $F_v$ with respect to the standard additive character $\psi_v$. If $v<\infty$, $dx_v$ gives a Haar measure on $F_v$ which gives the volume $q_v^{-d_v/2}$ to the integer ring $\Fo_v$. If $v$ is real, the measure $dx_v$ is the standard Lebesgue measure on $\BR$. If $v$ is complex, the measure $dx_v$ is the multiplication of two and the standard Lebesgue measure on $\BC$. We define $dx:=\prod_v dx_v$ on $\BA_F$. Moreover, we will take $d^\times x_v:=\zeta_{F_v}(1)\frac{dx_v}{|x_v|}$ as the Haar measure on the multiplicative group $F_v^\times$ and we let $d^\times x := \prod_v d^\times x_v$ be the Haar measure on the idele group $\BA^\times_F$.

\vspace{0.1cm}

We equip $\RK_v$ with the probability Haar measure $dk_v$. In other word, the volume of $\RK_v$ equals to one.

Using the Iwasawa decomposition on $\RG(F_v)$ which gives $\RG(F_v)=\RZ_n(F_v)\RN_n(F_v)\RA_n(F_v)\RK_v$, a Haar measure on $\RG(F_v)$ can be given by 
\begin{equation}\label{measure}
dg_v = d^\times u \prod_{1 \leq i<j \leq n} dx_{i,j} \frac{\prod_{i=1}^{n-1} d^\times y_i}{\delta(\RA_n)} dk_v,
\end{equation} 
where $\delta(A_n)=\delta(y_1, y_2, \cdots, y_{n-1})$ is the modular character defined on $\RA_n(F_v)$.
The measure on the adelic points of the subgroups in $\GL(n)$ are just the product of the local measures defined above. We also denote by $dg$ the quotient measure on the space $$\RX:= \RZ_n(\BA_F)\RG(F) \bs \RG(\BA_F),$$ 
with total volume $\mathrm{vol}(\RX)<\infty$.

\subsection{Whittaker functions}

We recall some basic background of Whittaker functions.

We start from the generic representations. Let $\pi$ be a global genreic automorphic representation of $\GL_n(\BA_F)$ and let $\phi\in \pi$ be a generic automorphic form. Let $W_{\phi}:\GL_n(\BA_F)\rightarrow \BC$ be the Whittaker function of $\phi$ which is given by
\begin{equation} \label{Whittaker-Cuspidal}
W_{\phi}(g):=\int_{\RN_n(F)\bs \RN_n(\BA_F)}\phi\left(n g\right)\psi_1^{-1}(n) dn.
\end{equation}
We will give the definition of the additive character $\psi_1$ later.

Since $\pi$ is generic, the Whittaker function does not vanish. The integral defined above \eqref{Whittaker-Cuspidal} is absolute convergent since the integral domain $\RN_n(F)\bs \RN_n(\BA_F)$ is compact and the integral is moreover uniformly comvergent on any compact subsets in $\GL_n(\BA_F)$. 

By changing variables, we note that $W_{\phi}(ng)=\psi_1(n)W_{\phi}(g)$ for all $n\in \RN_n(\BA_F)$, where $\psi_1$ is a nontrivial standard additive character from $\RN_n(F) \bs \RN_n(\BA_F)$ to $\BC$. In our paper, we will choose that $\psi_1(n):=\psi \left( \sum_{i=1}^{n-1} x_{i,i+1} \right)$, where $n=(x_{i,j}) \in \RN_n(\BA_F)$. Here $\psi$ is a standard additive character from $\BA_F / F$ to $\BC$ which is defined in Section 2.1.

We also have the following Fourier series expansion (see \cite[Theorem 1.1]{Cog07}) when we further assume that $\phi$ is a cusp form. Therefore, a cusp form is automatically generic by the following equation.

\begin{equation} \label{Fourierexpansion}
\phi(g)=\sum_{\gamma\in \RN_{n-1}(F)\bs \GL_{n-1}(F)}W_{\phi} \left( \begin{pmatrix} \gamma&\\&1 \end{pmatrix} g \right).
\end{equation}

We can also define local Whittaker functions for smooth generic admissible representations of $\GL_n$ over local fields $F_v$. We have the following well known decomposition theorem. If $\pi$ is a generic smooth irreducible admissible representation of $\GL_n(\BA_F)$, then we know that $\pi$ factors as a restricted tensor product by $\pi\simeq\otimes'_v\pi_v$. For each $v$, $\pi_v$ is a local generic smmoth irreducible admissible representation of $\GL_n(F_v)$. For each local place $v$, we can define local Whittaker functions. For every $\phi\in \pi$, if we write $\phi=\otimes'_v\phi_v$, then we have the decomposition

\begin{equation}
W_{\phi}(g)=\prod_v W_{\phi_v}(g_v),\; \text{where} \; g=(g_v)_v\in \GL_n(\BA_F).
\end{equation}

In fact, we know that the map $\phi\mapsto W_{\phi}$ intertwines the space of $\pi$ and the space
$$
\CW(\pi,\psi_1):=\{W_{\phi};\;\phi\in\pi\},
$$
which is called the Whittaker model of $\pi$. We can similarly define the local Whittaker models $\CW(\pi_v,\psi_{1,v})$. 

It is also important to consider Whittaker functions with respect to the character $\psi_1'=\psi_1^{-1}$ since it will appear in the integral representations for $\GL(n+1) \times \GL(n)$ Rankin-Selberg $L$-functions (see Section 3.2). It also has close relation with the Whittaker model of the contragredient representation $\widetilde{\pi}$ when the representation $\pi$ is unitarizable (see \cite[Appendix A]{FLO12}). Locally, we can define $W'_{\phi_v}$ by simply replacing $\psi_{1,v}$ by $\psi'_{1,v}=\psi_{1,v}^{-1}$. Globally, we can define $W'_{\phi}$ by replacing $\psi_1$ by $\psi_1'$. Moreover, we have the following equation

\begin{equation}
W'_{\overline{\phi_v}}=\overline{W_{\phi_v}}\text{ for all places $v$ of $F$}.
\end{equation}

For $W \in \CW(\pi_v,\psi_{1,v})$, the corresponding Whittaker model $\CW(\widetilde{\pi}_v,\psi_{1,v}^{-1})$ for contragredient representation is given by $\widetilde{W}(g):= W \left(w_0 (g^t)^{-1} \right)$, where $w_0$ is the longest Weyl element in $\GL(n)$ and $(g^t)^{-1}$ means the transpose inverse, therefore it is an involution in $\GL(n)$. The global Whittaker model $\CW(\widetilde{\pi},\psi_1^{-1})$ is defined in the same way. If a local generic smooth irreducible admissible representation $\pi_v$ is unramified, this is equivalent to say that there exists a vector which is right invariant by the action of the maximal compact subgroup of $\RG(F_v)$ in the space of $\pi$. We call such a vector spherical vector and spherical vectors are unique up to multiplication by scalars. We say that a vector $\phi_v\in\pi_v$ is normalized spherical if it is spherical. Moreover, its related Whittaker function $W_{\phi_v}$ satisfies $W_{\phi_v}(I_{n \times n})=1$.

\subsection{Spectral Decomposition}

Now we consider the $L^2$ space $L^2(\RX)$ which is the Hilbert space of complex valued square integrable functions with the domain $\RX$. The $L^2$-norm is defined by
\begin{equation}\label{L^2norm}
||\varphi||_{L^2(\RX)}^2 := \int_\RX |\varphi (g)|^2dg.
\end{equation}
For any $\varphi_1,\varphi_2 \in L^2(\RX)$, we have
$$ ( \varphi_1, \varphi_2)_{L^2(\RX)}:= \int_\RX \varphi_1(g) \overline{\varphi_2(g)} dg.$$

An important closed subspace of $L^2(\RX)$ is the closed subspace of cusp forms. We let  $L_{\mathrm{cusp}}^2(\RX)$ be the closed subspace of cusp forms. A cusp form is the function $\varphi\in L^2(\RX)$ with the following additional equation
$$\int_{\RN_n(F) \setminus \RN_n(\BA_F)}\varphi(ng)dn=0, $$
for almost all $g \in \GL_n(\BA_F)$.

The group $\RG(\BA_F)$ acts by right translations on both spaces $L^2(\RX)$ and $L_{\mathrm{cusp}}^2(\RX)$ and the corresponding representation is unitary with respect to the norm in \eqref{L^2norm}. The decomposition theorem of automorphic representations is well known, which states that each irreducible component $\pi$ factors as a restricted tensor product $\pi = \otimes'_v \pi_v$ for all places $k$, where $\pi_v$ are irreducible and unitary representations of the local groups $\RG(F_v)$. The spectral decomposition is established in 1970s by Gelbart, Jacquet and Langlands which gives the following orthogonal decomposition (see \cite{MW95} for more details)
\begin{equation}
L^2(\RX)=L^2_{\mathrm{cusp}}(\RX)\oplus L^2_{\mathrm{res}}(\RX)\oplus L^2_{\mathrm{cont}}(\RX).
\end{equation}
Here the closed subspace $L^2_{\mathrm{cusp}}(\RX)$ decomposes as a direct sum of irreducible $\RG(\BA_F)$-representations which are called the cuspidal automorphic representations. $L^2_{\mathrm{res}}(\RX)$ is called the residue spectrum which is the direct sum of all residue automorphic representations of $L^2(\RX)$. Finally, the continuous part $L^2_{\mathrm{cont}}(\RX)$ is a direct integral of irreducible $\RG(\BA_F)$-representations and it is expressed by the Eisenstein series. 

\vspace{0.1cm}

For any ideal $\Fa$ of $\CO_F$, we let $L^2(\RX,\Fa):= L^2(\RX)^{\RK_0(\Fa)}$ be the closed subspace of level $\Fa$ automorphic forms. This is the closed subspace of $L^2$ functions that are invariant under the subgroup $\RK_0(\Fa)$.

We now have the following spectral orthogonal decomposition with the level restriction:
\begin{equation} \label{Decomposition}
L^2(\RX, \Fa)=L^2_{\mathrm{cusp}}(\RX, \Fa)\oplus L^2_{\mathrm{res}}(\RX, \Fa)\oplus L^2_{\mathrm{cont}}(\RX, \Fa).
\end{equation}

\subsection{Automorphic Representations}

Now we consider automorphic representations. Let $\hat{\RX}$ be the isomorphism class of unitary irreducible automorphic representations which will appear in the spectral decomposition of $L^2(\RX)$. Since we will later use the integral representation of $\GL(n+1) \times \GL(n)$ Rankin-Selberg $L$-functions, we will only focus on the unitary irreducible generic automorphic representations. We will consider $\hat{\RX}_{\text{gen}}$ be the subset of $\hat{\RX}$ which is the isomorphism class of generic representations in $\hat{\RX}$, which is the unitary irreducible automorphic representation class that have (unique) Whittaker models. We fix an automorphic Plancherel measure $d \mu_{\text{aut}}$ on $\hat{\RX}$ which is compatible with the Haar measure on $\RX$.

Fortunately, we have the Langlands classification for $\hat{\RX}_{\text{gen}}$ (see \cite{CPS94} and \cite{MW95}). We take a partition $n=r_1+\dots+r_k$. Let $\pi_j$ be a unitary cuspidal automorphic representation for $\GL_{r_j}(\BA_F)$ (If $r_j=1$, we simply take $\pi_j$ to be a unitary Hecke character). We now consider the unitary induced representation $\Pi$ from the Levi subgroup $\GL(r_1)\times\dots\times\GL(r_k) \subseteq \GL(n)$ to $\GL(n)$ with the tensor product representation $\pi_1\otimes\dots\otimes\pi_k$. There exists a unique irreducible constituent of $\Pi$ which we denote by the isobaric sum $\pi_1\boxplus\dots\boxplus\pi_k$. Then, Langlands classification says that every element in $\hat{\RX}_\text{gen}$ is isomorphic to such an isobaric sum. 

Moreover, we know that all residual automorphic representations in $L^2_{\mathrm{res}}(\RX)$ is not generic (see \cite{JL13} Proposition 2.1).

Now we note that all the unitary generic Eisenstein series will have the form of the isobaric sum $\pi_1 \vert \cdot \rvert^{i t_1} \boxplus\dots\boxplus\pi_k \vert \cdot \rvert^{i t_k}$, where $t_1,t_2,\cdots,t_k \in \BR$. We will write
$$ \pi= \pi(\pi_1,\cdots,\pi_k,t_1,\cdots,t_k)=\pi_1 \vert \cdot \rvert^{i t_1} \boxplus\dots\boxplus\pi_k \vert \cdot \rvert^{i t_k}$$
with parameters $t_1,\cdots,t_k$ ($k \geq 2$).

\section{The product of $\GL(n+1) \times \GL(n)$ and $\GL(n) \times \GL(n-1)$ ($n \geq 3$) Rankin-Selberg $L$-functions}

\subsection{Abstract Spectral Decomposition Formula}

We have the following abstract spectral decomposition formula in $\GL(n)$ case (see \cite[Section 2.2]{MV10}). The discrete part is generated by cusp forms and residue representations. The continuous spectrum part which is expressed by Eisenstein series is complicated. However, we know that it depends on the partition of the positive integer $n$ (see Section 2.5).

\begin{prop}  \label{spectral theorem}
Suppose that $\RX:= \RZ_n(\BA_F) \GL_n(F) \bs \GL_n(\BA_F)$. Let $F\in C^{\infty}\left(\RX / \RK_0(\Fa)\right)$ and of rapid decay, then we have the following equation:
\begin{equation}
\begin{aligned}
F(g) &= \int_{\substack{\pi \in \hat{\RX} \\ \mathrm{cond}(\pi) |  \Fa}}\left( \sum_{\phi \in \CB(\pi,\Fa)} \langle  F, \phi \rangle \phi(g) \right) d \mu_{\mathrm{aut}}(\pi), \\
&= \int_{\substack{\pi \ \mathrm{automorphic} \\ \mathrm{cond}(\pi) |  \Fa}}\left( \sum_{\phi \in \CB(\pi,\Fa)} \langle  F, \phi \rangle \phi(g) \right) d \mu_{\mathrm{aut}}(\pi), \\
&=  \sum_{\substack{\pi \ \mathrm{cuspidal} \\ \mathrm{cond}(\pi) |  \Fa}}\sum_{\phi \in \CB(\pi,\Fa)} \langle  F, \phi \rangle \phi(g)+ \sum_{\substack{\pi \ \mathrm{residue} \\ \mathrm{cond}(\pi) |  \Fa}}\sum_{\phi \in \CB(\pi,\Fa)} \langle  F, \phi \rangle \phi(g) \\
&+  \int_{\substack{\pi \ \mathrm{continuous} \\ \mathrm{cond}(\pi) |  \Fa}}\left( \sum_{\phi \in \CB(\pi,\Fa)} \langle  F, \phi \rangle \phi(g) \right) d \mu_{\mathrm{aut}}(\pi).
\end{aligned}
\end{equation}
\end{prop}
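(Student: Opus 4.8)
The plan is to deduce this $\RK_0(\Fa)$-invariant version from the general abstract spectral expansion on $\RX = \RZ_n(\BA_F)\GL_n(F)\bs\GL_n(\BA_F)$ recorded in \cite{MV10}, combined with the orthogonal decomposition \eqref{Decomposition}. First I would recall the general spectral expansion of $L^2(\RX)$: for a function $F$ in the domain of the (relevant completion of the) space of smooth vectors, one has a Plancherel-type identity $F = \int_{\hat{\RX}} \big(\sum_{\phi\in\CB(\pi)}\langle F,\phi\rangle\,\phi\big)\,d\mu_{\mathrm{aut}}(\pi)$ with $\CB(\pi)$ an orthonormal basis of the $\pi$-isotypic subspace, the convergence being in $L^2(\RX)$; under the rapid-decay hypothesis on $F$ this converges pointwise (and absolutely) by Sobolev-type estimates, exactly as in the classical $\GL(2)$ Kuznetsov setting. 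This is the content of \cite[Section 2.2]{MV10}, and the smoothness plus rapid decay of $F$ is precisely what licenses the pointwise identity. The three-line display in the statement then just unwinds the three pieces of \eqref{Decomposition}: the cuspidal sum, the discrete residual sum, and the continuous integral.

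Next I would address the level restriction $F \in C^\infty(\RX/\RK_0(\Fa))$. Since $F$ is right-$\RK_0(\Fa)$-invariant, each Petersson inner product $\langle F,\phi\rangle$ vanishes unless $\phi$ itself has a nonzero $\RK_0(\Fa)$-fixed component; projecting $\phi$ onto its $\RK_0(\Fa)$-fixed vectors and using that $F$ is fixed, $\langle F,\phi\rangle = \langle F, e_{\Fa}\phi\rangle$ where $e_{\Fa}$ is the idempotent averaging over $\RK_0(\Fa)$. Hence in the expansion one may replace the full orthonormal basis $\CB(\pi)$ by an orthonormal basis $\CB(\pi,\Fa)$ of the finite-dimensional space $\pi^{\RK_0(\Fa)}$, and one may restrict the spectral integral to those $\pi$ with $\pi^{\RK_0(\Fa)}\neq 0$, i.e. those with $\mathrm{cond}(\pi)\mid\Fa$ (new-vector theory, \cite{JPS81}). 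For the continuous spectrum one does the analogous thing on each Eisenstein component, noting that an induced representation $\pi(\pi_1,\dots,\pi_k,t_1,\dots,t_k)$ has a $\RK_0(\Fa)$-fixed vector precisely when its conductor divides $\Fa$, and the parameters $t_j$ range over $\BR$ as in Section 2.5; the $d\mu_{\mathrm{aut}}$ integral over the continuous part is then an honest integral over these real parameters against the Plancherel density. The residual part contributes only finitely many (or a discrete family of) representations with $\mathrm{cond}(\pi)\mid\Fa$, and by \cite[Proposition 2.1]{JL13} none of these is generic — a remark we record but do not need for the identity itself.

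The main technical point — really the only thing beyond bookkeeping — is justifying the pointwise (rather than merely $L^2$) convergence of the spectral expansion and the interchange of the sum/integral with evaluation at $g$. The hard part will be controlling the contribution of the continuous spectrum: one needs that for $F$ smooth of rapid decay the spectral coefficients $\langle F,\phi\rangle$ decay fast enough in the archimedean and combinatorial parameters of $\pi$ to dominate the polynomial growth of $\sum_{\phi\in\CB(\pi,\Fa)}|\phi(g)|$ on compacta. This is handled in the standard way: apply an elliptic element of the center of the universal enveloping algebra (a Casimir-type operator) repeatedly to $F$, use that this multiplies the $\pi$-component by the corresponding eigenvalue, and invoke the a priori polynomial bound for $|\phi(g)|$ together with the convergence of the Plancherel measure against any polynomial times a sufficiently negative power of the eigenvalue — exactly the argument underlying the pre-trace formula in \cite{MV10}. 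Given this, the three displayed lines follow by splitting $\hat{\RX}$ according to \eqref{Decomposition} and identifying the measure $d\mu_{\mathrm{aut}}$ restricted to the cuspidal and residual parts with counting measure. I would present the argument at the level of quoting \cite[Section 2.2]{MV10} for the unrestricted expansion and then carefully performing the $\RK_0(\Fa)$-reduction, since that is the only step genuinely specific to this paper's setup.
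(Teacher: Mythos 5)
Your plan matches what the paper does: the paper offers no self-contained proof, simply citing \cite[Section 2.2]{MV10} for the abstract spectral expansion and noting (in the accompanying remark) that the proposition is the smooth pointwise version of the $L^2$-decomposition \eqref{Decomposition}. Your fleshing out of the two points the paper leaves implicit — the reduction to $\RK_0(\Fa)$-fixed vectors via the idempotent $e_{\Fa}$ and new-vector theory, and the Sobolev/elliptic-operator argument for pointwise absolute convergence — is exactly the standard justification behind that citation, so this is the same approach with the details supplied.
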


\begin{rmk}
Above Proposition \ref{spectral theorem} is a smooth version of the spectral decomposition \ref{Decomposition}.
 
\end{rmk}

\subsection{Rankin-Selberg $L$-functions}

We need to recall integral representations of $\GL(n+1) \times \GL(n)$ ($n \geq 3$) $L$-functions.

The theory is quite similar to the adelic Hecke-Jacquet-Langlands' theory \cite{God18} of twisted $L$-functions for $\GL_2 \times \GL_1$. We let $\Pi$ be irreducible automorphic representations of $\GL_{n+1}(\BA)$. Let $\Phi \in \Pi$ be an automorphic form. Let $\pi$ be irreducible automorphic representations of $\GL_n(\BA)$ and let $\phi\in \pi$ be an automorphic form. We first assume that $\Phi$ is a cusp form and is of rapid decay. Therefore, for every $s \in \BC$, we can consider the following period integral

$$
I(s,\Phi,\phi):=\int_{\GL_n(F)\bs \GL_n(\BA)}\Phi\begin{pmatrix} h&\\&1 \end{pmatrix} \phi(h)|\det h|^{s-\frac12} dh,
$$
which defines an entire function of $s$ and is bounded on vertical strips.

From the Whittaker-Fourier expansion of cusp forms \eqref{Fourierexpansion} \cite[Theorem 1.1]{Cog07}, if $\Phi$ is a cusp form, we will have (for $\Re(s)$ large enough)
\begin{equation}
I(s,\Phi,\phi)=\Psi(s,W_{\Phi},W'_{\phi}),\;\;(\Re(s)\gg 1),
\end{equation}
where the global zeta integral is given by
\begin{equation}
\Psi(s,W_{\Phi},W'_{\phi}):=\int_{N_n(\BA)\bs \GL_n(\BA)}W_{\Phi} \begin{pmatrix} h&\\&1 \end{pmatrix} W'_{\phi}(h)|\det h|^{s-\frac12} dh.
\end{equation}
The following result can be found in \cite{Cog07} \cite{JPS79} \cite{JPS83} \cite{JS90} \cite{Jac09}.

\begin{prop} 
Let $\Phi=\otimes'_v\Phi_v\in \Pi$ and $\phi=\otimes'_v\phi_v\in \pi$ be factorizable automorphic forms on $\GL_{n+1}(\BA_F)$ and $\GL_n(\BA_F)$. Let $W_{\Phi_v}$ and $W'_{\phi_v}$ be the corresponding Whittaker functions defined in Section 2.3. Then, for $\Re(s)$ large enough, the global zeta integral $\Phi(s,W_{\Phi},W'_{\phi})$ converges and we have the following factorization (Euler product)
$$
\Psi(s,W_{\Phi},W'_{\phi})=\prod_v\Psi_v(s,W_{\Phi_v},W'_{\phi_v}),
$$
where the local zeta integral is given by
\begin{equation}
\Psi_v(s,W_{\Phi_v},W'_{\phi_v}):=\int_{N_n(F_v)\bs \GL_n(F_v)}W_{\Phi_v} \begin{pmatrix} h_v&\\&1 \end{pmatrix} W'_{\phi_v}(h_v)|\det h_v|^{s-\frac12} d h_v.
\end{equation}
Moreover, if both $\Pi_v$ and $\pi_v$ are unramified and $\Phi_v$ and $\phi_v$ are normalized spherical vectors, we will have 
$$
\Psi_v(s,W_{\Phi_v},W'_{\phi_v})=L(s,\Pi_v\times\pi_v).
$$
\end{prop}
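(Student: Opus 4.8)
The plan is to follow the classical Rankin--Selberg theory of Jacquet, Piatetski-Shapiro and Shalika for $\GL(n+1)\times\GL(n)$: the proposition is precisely the "local--global" and "unramified" parts of that theory, so I would carry it out in three movements — establish the Eulerian factorization of $\Psi$, justify absolute convergence for $\Re(s)$ large, and then evaluate the local factor at an unramified place.

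First I would prove the factorization. Since $\Phi=\otimes'_v\Phi_v$ and $\phi=\otimes'_v\phi_v$ are factorizable, the global Whittaker functions split as $W_\Phi(g)=\prod_v W_{\Phi_v}(g_v)$ and $W'_\phi(h)=\prod_v W'_{\phi_v}(h_v)$ (Section 2.3), and the quotient measure on $\RN_n(\BA_F)\bs\GL_n(\BA_F)$ is the restricted product of the local measures induced by \eqref{measure}. Inserting these into the definition of $\Psi$ and interchanging product and integral gives, formally, $\Psi(s,W_\Phi,W'_\phi)=\prod_v\Psi_v(s,W_{\Phi_v},W'_{\phi_v})$. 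To make this rigorous I would note that at almost every place $\Phi_v,\phi_v$ are normalized spherical, so by the unramified computation below $\Psi_v=L(s,\Pi_v\times\pi_v)$, and the Euler product of these local $L$-factors converges absolutely for $\Re(s)$ large thanks to the known bounds toward Ramanujan on the Satake parameters of $\Pi$ and $\pi$. At the finitely many remaining places I would use the Iwasawa decomposition $\GL_n(F_v)=\RN_n(F_v)\RA_n(F_v)\RK_v$ to rewrite $\Psi_v$ as an integral over $\RA_n(F_v)\times\RK_v$ and invoke the standard gauge estimates on Whittaker functions — on the torus $|W_{\Phi_v}|$ and $|W'_{\phi_v}|$ are each dominated by a finite sum of positive characters times a factor decaying rapidly toward the cusp — to see that $\Psi_v$ converges absolutely for $\Re(s)$ large. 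Together these yield absolute convergence of $\prod_v\Psi_v$, which legitimizes the interchange. Alternatively, one can deduce the identity $I(s,\Phi,\phi)=\Psi(s,W_\Phi,W'_\phi)$ from the Fourier expansion \eqref{Fourierexpansion} (valid since $\Phi$ is cuspidal) and use the known convergence of the period integral $I(s,\Phi,\phi)$.

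Next I would treat the unramified local integral. Let $(\alpha_1,\dots,\alpha_{n+1})$ and $(\beta_1,\dots,\beta_n)$ denote the Satake parameters of $\Pi_v$ and $\pi_v$, with $\Phi_v,\phi_v$ normalized spherical. Since $W_{\Phi_v}$ and $W'_{\phi_v}$ are right $\RK_v$-invariant and contribute only from the dominant part of the torus, the Iwasawa decomposition collapses $\Psi_v$ to a sum over dominant weights $\lambda$ (with at most $n$ parts) of the values $W_{\Phi_v}\!\left(\begin{smallmatrix}\varpi_v^\lambda&\\&1\end{smallmatrix}\right)$ and $W'_{\phi_v}(\varpi_v^\lambda)$ weighted by $q_v^{-s|\lambda|}$ and the inverse modular character. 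The Shintani--Casselman--Shalika formula evaluates these Whittaker values as the Schur polynomials $s_\lambda(\alpha_1,\dots,\alpha_{n+1})$ and $s_\lambda(\beta_1,\dots,\beta_n)$, up to half-integral normalizing powers of $q_v$ that are exactly cancelled by $|\det h_v|^{s-\frac12}$ and the modulus; then the Cauchy identity
\[
\sum_\lambda s_\lambda(x_1,\dots,x_{n+1})\,s_\lambda(y_1,\dots,y_n)\,t^{|\lambda|}=\prod_{i=1}^{n+1}\prod_{j=1}^{n}\frac{1}{1-x_iy_jt},
\]
applied with $t=q_v^{-s}$, gives $\Psi_v(s,W_{\Phi_v},W'_{\phi_v})=\prod_{i,j}(1-\alpha_i\beta_jq_v^{-s})^{-1}=L(s,\Pi_v\times\pi_v)$.

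I expect the main obstacle to be the bookkeeping in this last step: one must fix the precise normalization in the Casselman--Shalika formula and check that the modular character, the $\rho$-shift, and the factor $|\det h_v|^{s-\frac12}$ conspire so that the weight in the Cauchy sum is exactly $t^{|\lambda|}$ with $t=q_v^{-s}$ and no residual power of $q_v$ survives; one also has to see that only partitions with at most $n$ parts occur, which is what matches the $\GL(n)$-side Schur polynomials. By contrast the factorization and convergence are routine given the gauge estimates and Euler-product bounds already standard in \cite{JPS79}, \cite{JPS83}, \cite{JS90}, \cite{Cog07}, \cite{Jac09}.
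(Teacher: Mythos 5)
The paper gives no proof of this proposition; it is stated as a citation to the Rankin--Selberg literature (Cogdell, Jacquet--Piatetski-Shapiro--Shalika, Jacquet--Shalika, Jacquet), and your sketch reproduces exactly the standard argument from those sources, so it is correct and takes the intended approach. Specifically: Whittaker factorizability plus Fubini (legitimized by gauge estimates at the finitely many ramified places and absolute convergence of the Euler product of local $L$-factors at the rest) gives the factorization of $\Psi$, and Shintani/Casselman--Shalika together with the Cauchy identity $\sum_{\lambda}s_\lambda(x_1,\ldots,x_{n+1})s_\lambda(y_1,\ldots,y_n)t^{|\lambda|}=\prod_{i,j}(1-x_iy_jt)^{-1}$ gives the unramified local evaluation. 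One small point you may wish to make explicit in the unramified step: $W'_{\phi_v}$ lies in the $\psi^{-1}$-Whittaker model of $\pi_v$, but the diagonal element conjugating $\psi$ to $\psi^{-1}$ has unit entries, so the spherical vector in $\CW(\pi_v,\psi^{-1})$ has the same torus values as the one in $\CW(\pi_v,\psi)$; thus Shintani produces Schur polynomials in the Satake parameters of $\pi_v$ itself, and the Cauchy identity correctly yields $L(s,\Pi_v\times\pi_v)$ rather than the $L$-factor of the contragredient.
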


One of the key ingredients in our paper is the following generalization of integral representation on Rankin-Selberg $L$-functions for $\GL(n+1) \times \GL(n)$ given by Ichino and Yamana \cite{IY15}. 

\begin{prop} \label{IY}
Let $\Phi \in \Pi$ and $\phi \in \pi$ be automorphic forms on $\GL_{n+1}(\BA_F)$ and $\GL_n(\BA_F)$. Assume that the following period integral
$$
I(s,\Phi,\phi):=\int_{\GL_n(F)\bs \GL_n(\BA)}\Phi\begin{pmatrix} h&\\&1 \end{pmatrix} \phi(h)|\det h|^{s-\frac12} dh
$$
is absolute convergent, we will still have the following equation for $\Re(s)$ large enough:
\begin{equation}
I(s,\Phi,\phi)=\Psi(s,W_{\Phi},W'_{\phi}),\;\;(\Re(s)\gg 1),
\end{equation}
where
\begin{equation}
\Psi(s,W_{\Phi},W'_{\phi}):=\int_{N_n(\BA)\bs \GL_n(\BA)}W_{\Phi} \begin{pmatrix} h&\\&1 \end{pmatrix} W'_{\phi}(h)|\det h|^{s-\frac12} dh.
\end{equation}
\end{prop}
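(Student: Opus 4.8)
The plan is to reduce Proposition \ref{IY} to the already-established cuspidal case (the Proposition immediately preceding it) by an approximation and continuation argument, following Ichino--Yamana \cite{IY15}. The point is that the identity $I(s,\Phi,\phi)=\Psi(s,W_\Phi,W'_\phi)$ for $\Re(s)\gg 1$ was proved above under the hypothesis that $\Phi$ is a cusp form of rapid decay; now we only assume the period integral $I(s,\Phi,\phi)$ converges absolutely, and $\Phi,\phi$ range over general automorphic forms. First I would recall the basic mechanism producing the identity: one substitutes the Whittaker--Fourier expansion \eqref{Fourierexpansion} of $\Phi$ (restricted along the mirabolic, i.e.\ the expansion of $\Phi\begin{pmatrix} h&\\&1\end{pmatrix}$ in terms of $W_\Phi$) into the definition of $I(s,\Phi,\phi)$, unfolds the sum over $\RN_n(F)\bs\GL_n(F)$ against the integral over $\GL_n(F)\bs\GL_n(\BA)$ to collapse it to an integral over $\RN_n(\BA)\bs\GL_n(\BA)$, and then folds in the expansion of $\phi$ against the resulting $\psi_1$-equivariant function to replace $\phi(h)$ by $W'_\phi(h)$. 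Each of these manipulations is a formal unfolding that is justified once absolute convergence is in hand.

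The key steps, in order, are: (1) establish that $\GL_{n+1}$ automorphic forms $\Phi$ (not necessarily cuspidal) still admit the relevant mirabolic Fourier expansion — this is where one must be careful, since for a general automorphic form the constant terms along parabolics do not vanish, but the expansion of $\Phi\begin{pmatrix} h&\\&1\end{pmatrix}$ along the $(n+1,n+1)$-mirabolic still holds in the sense of \cite[Theorem 1.1]{Cog07} because genericity of $\Pi$ is all that is used; (2) justify interchanging the sum/integral using the assumed absolute convergence of $I(s,\Phi,\phi)$ together with the bound that $W_\Phi$ inherits from $\Phi$ on Siegel domains (here one uses that $\Phi$ is of moderate growth and that $\Pi$ is generic, so $W_\Phi$ is of rapid decay in the torus direction after removing the central/modulus factor, for $\Re(s)$ large); (3) carry out the unfolding to reach $\Psi(s,W_\Phi,W'_\phi)$ and separately check that this zeta integral converges for $\Re(s)\gg 1$; (4) conclude the identity for $\Re(s)\gg 1$. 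One can alternatively argue by density: since the map $(\Phi,\phi)\mapsto I(s,\Phi,\phi)$ and $(\Phi,\phi)\mapsto \Psi(s,W_\Phi,W'_\phi)$ are both continuous in an appropriate topology and agree on the dense subspace of cusp forms (by the previous proposition), they agree in general — but making "dense" precise for automorphic forms of moderate growth is itself delicate, so I would favor the direct unfolding.

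The main obstacle is step (2): controlling the growth of $\Phi$ and hence of $W_\Phi$ well enough to legitimize the unfolding without the cuspidality hypothesis. For cusp forms one has rapid decay for free; for a general automorphic form $\Phi$ on $\GL_{n+1}$ one only has moderate growth, so the half-plane of absolute convergence of $\Psi(s,W_\Phi,W'_\phi)$ and of $I(s,\Phi,\phi)$ must be located and matched, and the termwise interchange justified by a dominated-convergence argument on $\RN_{n}(F)\bs\GL_n(\BA)$ using the gauge estimates for Whittaker functions (rapid decay of $W_{\Phi_v}$ as the torus parameter $\to\infty$ at each place, polynomial control as it $\to 0$). This is exactly the technical heart of \cite{IY15}, and in the write-up I would cite their regularization of the zeta integral rather than reprove it, contenting myself with checking that our global hypotheses (trivial central characters, the specific additive character $\psi_1$, and the normalization of measures from Section 2.2) are compatible with their setup so that the stated identity holds verbatim for $\Re(s)\gg 1$.
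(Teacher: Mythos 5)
The paper's proof is a one-line citation: it simply invokes Corollary~3.10 and Theorem~1.1 of Ichino--Yamana \cite{IY15}. Your proposal correctly identifies that the result ultimately rests on \cite{IY15} and says you would cite their work for ``the technical heart,'' so in the end you land in the same place as the paper. In that sense the outcome is fine.

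However, the sketch you give of \emph{why} the result should hold contains a genuine misconception, and it concerns the step you flag as step~(1), not the analytic estimates of step~(2). You assert that the mirabolic Whittaker--Fourier expansion of $\Phi\begin{pmatrix} h&\\&1\end{pmatrix}$ ``still holds in the sense of \cite[Theorem 1.1]{Cog07} because genericity of $\Pi$ is all that is used.'' That is not correct. Cogdell's expansion, as stated, is for cusp forms: the inductive argument at each step throws away the term corresponding to the \emph{trivial} character of the abelian unipotent radical, and that term vanishes precisely because $\Phi$ is cuspidal, not because $\Pi$ is generic. For a non-cuspidal automorphic form (e.g.\ an Eisenstein series) those degenerate Fourier coefficients do not vanish, and the naive expansion picks up additional terms beyond the sum of $W_\Phi$ over $\RN_n(F)\bs\GL_n(F)$. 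The substantive content of Ichino--Yamana is exactly the analysis of these degenerate contributions: showing that, when paired against $\phi$ and integrated over $\GL_n(F)\bs\GL_n(\BA)$ (under the convergence hypothesis, or after regularization), they drop out, so that the period still reduces to the zeta integral $\Psi(s,W_\Phi,W'_\phi)$. So the obstruction is not merely a dominated-convergence issue legitimizing an interchange; it is that the identity you want to unfold from is, termwise, a different identity in the non-cuspidal case. If you are going to cite \cite{IY15} anyway, as the paper does, your write-up should either state this accurately or simply defer to them without the incorrect claim about the Fourier expansion.
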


\begin{proof}
This is a combination of Corollary 3.10 and Main Theorem (Theorem 1.1) of Ichino and Yamana's paper \cite{IY15}.
\end{proof}

\subsection{Abstract Reciprocity Formula}

We can briefly summarize the proof of Theorem \ref{pre} now. The proof of Theorem \ref{pre} is a combination of above decomposition formula \ref{spectral theorem} and an identity between two periods. Later, we will relate the period to moments of certain $L$-functions. The proof of an identity between two periods is a rather simple matrix computation and is really important to abstract pre-spectral reciprocity formula.

Suppose that $\Phi\in C^{\infty}(Z_{n+1}(\BA)\GL_{n+1}(F)\bs\GL_{n+1}(\BA))$ is a cuspidal automorphic form. Therefore, it is of rapid decay.

Then, we can define the projection by
\begin{equation} 
\CA_s\Phi(h_n) \,:=|\det h|^{s-\frac{1}{2}} \cdot \int_{F^\times\bs\BA^{\times}}  \Phi\left(\begin{pmatrix} z_n(u)h_n&\\&1 \end{pmatrix} \right)|u|^{n(s-\frac{1}{2})}d^{\times} u.
\end{equation}

Since $\Phi$ is of rapid decay, the above average projection map over the center $\CA_s \Phi$ is well-defined for every complex number $s$ and is again of rapid decay in terms of $h_n \in \GL(n)$. Moreover, we may easily check that $\CA_s \Phi$ is invariant under the action of the center $\RZ_n(\BA)$.

We also give the following definition of the period

$$
I(w,\phi,\varphi):=\int_{\GL_{n-1}(F)\bs \GL_{n-1}(\BA_F)}  \phi\begin{pmatrix} h_{n-1}&\\&1 \end{pmatrix} \varphi(h_{n-1})|\det h_{n-1}|^{w-\frac12} dh_{n-1},
$$

whenever it is converges. Here $\phi$ is an automorphic form defined on $\GL_n(\BA)$ and $\varphi$ is a fixed everywhere unramified automorphic form for $\GL_{n-1}(\BA)$. Moreover, we assume that $\varphi$ is invariant under the center $Z(\BA)$, therefore its central character is trivial. If $n=2$, we see that $\varphi=1$ which is the trivial character. We can now state the abstract spectral reciprocity formula.

\begin{prop} \textbf[Abstract Reciprocity Formula] \label{abstract-reciprocity-formula}

Let $\Phi\in C^{\infty}(Z_{n+1}(\BA)\GL_{n+1}(F)\bs\GL_{n+1}(\BA))$ be a cusp form. Then, for every $s,\,w\in\BC$, we have the following abstract reciprocity equation

$$
I(w,\CA_s\Phi,\varphi)=I(w',\CA_{s'}\check{\Phi},\varphi),
$$
where $s'=\frac{1+(n-1)w-s}{n},\,w'=\frac{(n+1)s+w-1}{n}$, and $\check{\Phi}$ is given by 
\begin{equation}\label{check-Phi}
\check{\Phi}=\Pi\left(w_{12} \right)\cdot\Phi,\;\;w_{12}:=\begin{pmatrix} I_{n-1} &&\\&&1\\&1& \end{pmatrix}.
\end{equation}

\end{prop}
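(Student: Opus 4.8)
The identity $I(w,\CA_s\Phi,\varphi)=I(w',\CA_{s'}\check\Phi,\varphi)$ is, as the text says, a pure change-of-variables statement: both sides are, after unfolding the definitions of $\CA_\bullet$ and $I(w,\cdot,\varphi)$, integrals of $\Phi$ over a space built from $\GL_{n-1}$, a central torus, and the adelic norm, against the fixed form $\varphi$ on $\GL_{n-1}$ and an explicit power of $|\det|$ and $|u|$. So the strategy is: (i) unfold the left side into a single integral over $F^\times\bs\BA^\times$ times $\GL_{n-1}(F)\bs\GL_{n-1}(\BA)$ of $\Phi$ evaluated at a block-diagonal element; (ii) apply the Weyl element $w_{12}$, which conjugates the relevant block structure into a new block structure; (iii) re-fold the resulting expression back into the shape $I(w',\CA_{s'}\check\Phi,\varphi)$, reading off $s'$ and $w'$ by matching the exponents of the two scaling parameters.

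First I would write out $\CA_s\Phi(h_n)$ inside $I(w,\CA_s\Phi,\varphi)$, so the left side becomes
\begin{equation*}
\int_{\GL_{n-1}(F)\bs\GL_{n-1}(\BA)}\int_{F^\times\bs\BA^\times}\Phi\begin{pmatrix} z_n(u)\,\iota(h_{n-1})&\\&1\end{pmatrix}\varphi(h_{n-1})\,\bigl|\det \iota(h_{n-1})\bigr|^{s-\frac12}|u|^{n(s-\frac12)}\,\bigl|\det h_{n-1}\bigr|^{w-\frac12}\,d^\times u\,dh_{n-1},
\end{equation*}
where $\iota(h_{n-1})=\begin{pmatrix}h_{n-1}&\\&1\end{pmatrix}\in\GL_n$ and $z_n(u)$ is the scalar matrix $u\cdot I_n$. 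The key observation is that $z_n(u)\,\iota(h_{n-1})$ embedded in the top-left $n\times n$ block of $\GL_{n+1}$, together with the entry $1$ in the last slot, is the matrix $\mathrm{diag}(u h_{n-1},\,u,\,1)$. Conjugating by $w_{12}=\begin{pmatrix}I_{n-1}&&\\&&1\\&1&\end{pmatrix}$ swaps the last two diagonal slots, turning this into $\mathrm{diag}(u h_{n-1},\,1,\,u)$. Now I would reinterpret $\mathrm{diag}(u h_{n-1},1,u)$ as $z_{n+1}(u)\cdot\mathrm{diag}(h_{n-1},u^{-1},1)$, i.e. pull out a central element of $\GL_{n+1}$; since $\Phi$ has trivial central character (it lies in $C^\infty$ of the quotient by $Z_{n+1}(\BA)$), the central $z_{n+1}(u)$ disappears, and what is left is $\Phi$ evaluated at $\mathrm{diag}(h_{n-1},u^{-1},1)$, which I then want to read as $\bigl(\CA_{s'}\check\Phi\bigr)$ evaluated on $\GL_{n-1}$ inside $I(w',\cdot,\varphi)$ — here $\check\Phi=\Pi(w_{12})\Phi$ absorbs the Weyl conjugation, and the new central variable of $\CA_{s'}$ is $u^{-1}$ (so $d^\times u$ is invariant under $u\mapsto u^{-1}$ and $|u^{-1}|=|u|^{-1}$).

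The remaining work is bookkeeping of exponents. After the substitution $u\mapsto u^{-1}$ and collecting all powers of $|u|$ and $|\det h_{n-1}|$, I would demand that the integrand match the one obtained by unfolding $I(w',\CA_{s'}\check\Phi,\varphi)$: this forces two linear equations relating $(s',w')$ to $(s,w)$ (one from the exponent of $|u|$, which in $\GL_{n+1}$ involves the factor $n$ from $z_n(u)$ being an $n\times n$ scalar and $n-1$ from $z_{n-1}$, and one from the exponent of $|\det h_{n-1}|$), and solving them gives exactly $s'=\frac{1+(n-1)w-s}{n}$ and $w'=\frac{(n+1)s+w-1}{n}$. One checks consistency: $n s'+ s = 1+(n-1)w$ and $n w' - w = (n+1)s-1$, and indeed $(n+1)s'+\ldots$ works out so that the roles of $(s,w)\leftrightarrow(s',w')$ are involutive, as they must be since $w_{12}^2=I$. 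I would also note that rapid decay of $\Phi$ (hence of $\CA_s\Phi$ and $\CA_{s'}\check\Phi$) together with the automorphy of $\varphi$ guarantees all these integrals converge absolutely for every $s,w$, so the manipulations are legitimate with no need to restrict to a half-plane.

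\textbf{Main obstacle.} The genuine calculus here is essentially trivial; the one place to be careful — and the step I would treat most explicitly — is the passage from $\mathrm{diag}(uh_{n-1},u,1)\xrightarrow{w_{12}}\mathrm{diag}(uh_{n-1},1,u)$ to "$\check\Phi$ evaluated at $\mathrm{diag}(h_{n-1},u^{-1},1)$", i.e. correctly factoring out the $\GL_{n+1}$-central element and confirming that the leftover matrix sits in the $\GL_n$-block in the precise position that the definition of $\CA_{s'}$ expects (so that the inner central integral of $\CA_{s'}$ over $F^\times\bs\BA^\times$ really is the $d^\times u$ integral we have, not a shifted or inverted copy of it). Getting the embedding $\iota$, the two central tori $z_n$ and $z_{n+1}$, and the normalization of $\varphi$ (trivial central character, everywhere unramified) all lined up is the crux; once that is pinned down, matching exponents to extract $s',w'$ is immediate.
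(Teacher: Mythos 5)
Your proposal follows the same route as the paper: unfold $I(w,\CA_s\Phi,\varphi)$ into a double integral over $F^\times\bs\BA^\times$ and $\GL_{n-1}(F)\bs\GL_{n-1}(\BA)$, write $\mathrm{diag}(uh_{n-1},u,1)=z_{n+1}(u)\cdot w_{12}\,\mathrm{diag}(h_{n-1},u^{-1},1)\,w_{12}$, kill the centre and the left $w_{12}\in\GL_{n+1}(F)$ by automorphy, and absorb the right $w_{12}$ into $\check\Phi$. That part is exactly right.

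The one imprecision is in the final re-folding. You claim that the substitution $u\mapsto u^{-1}$ alone, followed by matching exponents, produces $I(w',\CA_{s'}\check\Phi,\varphi)$. It does not: after $u\mapsto u^{-1}$ your integrand carries $\check\Phi\bigl(\mathrm{diag}(h_{n-1},u,1)\bigr)$, whereas unfolding $I(w',\CA_{s'}\check\Phi,\varphi)$ gives $\check\Phi\bigl(\mathrm{diag}(z_{n-1}(u)h_{n-1},u,1)\bigr)$, i.e.\ the $\GL_{n-1}$ block must itself be scaled by $u$. No choice of $(s',w')$ can reconcile two integrands whose $\check\Phi$-arguments differ as functions of $(u,h_{n-1})$. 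You need the \emph{joint} change of variables $(u,h_{n-1})=(u'^{-1},\,z_{n-1}(u')h'_{n-1})$, as the paper writes. This is harmless: $\varphi$ has trivial central character so $\varphi(z_{n-1}(u')h'_{n-1})=\varphi(h'_{n-1})$, $d^\times u$ and $dh_{n-1}$ are invariant, and $|\det(z_{n-1}(u')h'_{n-1})|=|u'|^{n-1}|\det h'_{n-1}|$ supplies the extra factor $(n-1)(s+w-1)$ in the $|u'|$-exponent. Only then do the two linear equations $n(s'-\tfrac12)=n(\tfrac12-s)+(n-1)(s+w-1)$ and $s'+w'=s+w$ appear, yielding your stated $(s',w')$. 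Once that correction is made your argument is the paper's argument; everything else (absolute convergence from the rapid decay of the cuspidal $\Phi$, involutivity from $w_{12}^2=I$) you have right.
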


\begin{proof}
From the definition, we may write

\begin{equation} \label{As}
\begin{aligned}
I(w,\CA_s\Phi,\varphi) &= \int_{F^{\times}\bs\BA^{\times}} \int_{\GL_{n-1}(F)\bs \GL_{n-1}(\BA_F)} \Phi \left( \begin{pmatrix} z_{n-1}(u)h_{n-1} && \\ & u & \\ && 1 \end{pmatrix} \right) \varphi(h_{n-1}) \\
& \cdot \vert u \rvert^{n(s-\frac{1}{2})} \vert \det h_{n-1} \rvert^{s+w-1} d h_{n-1} d^{\times} u \\
&= \int_{F^{\times}\bs\BA^{\times}} \int_{\GL_{n-1}(F)\bs \GL_{n-1}(\BA_F)} \Phi \left( \begin{pmatrix} z_{n-1}(u)h_{n-1} && \\ & u & \\ && 1 \end{pmatrix} \right) \varphi(h_{n-1}) \\ 
& \cdot \vert u \rvert^{n(s-\frac{1}{2})} \vert \det h_{n-1} \rvert^{s+w-1} d h_{n-1} d^{\times} u.
\end{aligned}
\end{equation}

Since $\Phi$ is a cusp form, our integral is well-defined for all complex parameters $s$ and $w$. Now, since $\Phi$ is left invariant by $Z_{n+1}(\BA)\GL_{n+1}(F)$, we see that for every $u,\;h_{n-1}$, we have (Note that $w_{12} \cdot w_{12}= I_{n+1}$)

\begin{equation}
\begin{aligned}
& \Phi \left( \begin{pmatrix} z_{n-1}(u)h_{n-1} && \\ & u & \\ && 1 \end{pmatrix} \right) \\
=& \Phi \left( \begin{pmatrix} z_{n-1}(u) && \\ & u & \\ && u \end{pmatrix} \begin{pmatrix} h_{n-1} && \\ & 1 & \\ && u^{-1} \end{pmatrix} \right) \\
=& \Phi \left( \begin{pmatrix} z_{n-1}(u) && \\ & u & \\ && u \end{pmatrix} w_{12} \begin{pmatrix} h_{n-1} && \\ & u^{-1} & \\ && 1 \end{pmatrix} w_{12} \right) \\
=& \Phi \left( \begin{pmatrix} h_{n-1} && \\ & u^{-1} & \\ && 1 \end{pmatrix} w_{12} \right) \\
=& \check{\Phi} \left( \begin{pmatrix} h_{n-1} && \\ & u^{-1} & \\ && 1 \end{pmatrix}  \right).
\end{aligned}
\end{equation}

This gives that

\begin{equation}
\begin{aligned}
I(w,\CA_s\Phi,\varphi) &= \int_{F^{\times}\bs\BA^{\times}} \int_{\GL_{n-1}(F)\bs \GL_{n-1}(\BA_F)} \Phi \left( \begin{pmatrix} z_{n-1}(u)h_{n-1} && \\ & u & \\ && 1 \end{pmatrix} \right) \varphi(h_{n-1}) \\
& \cdot \vert u \rvert^{n(s-\frac{1}{2})} \vert \det h_{n-1} \rvert^{s+w-1} d h_{n-1} d^{\times} u \\
&= \int_{F^{\times}\bs\BA^{\times}} \int_{\GL_{n-1}(F)\bs \GL_{n-1}(\BA_F)} \check{\Phi} \left( \begin{pmatrix} h_{n-1} && \\ & u^{-1} & \\ && 1 \end{pmatrix} \right) \varphi(h_{n-1}) \\ 
& \cdot \vert u \rvert^{n(s-\frac{1}{2})} \vert \det h_{n-1} \rvert^{s+w-1} d h_{n-1} d^{\times} u.
\end{aligned}
\end{equation}

Applying this to \eqref{As} and using the change of variables which is given by $(u,h_{n-1})= (u'^{-1}, z_{n-1}(u')h_{n-1}')$. We will see that the following equations hold:
$$ n(s'-\frac{1}{2})=n(\frac{1}{2}-s)+(n-1)(s+w-1)$$
and
$$ s'+w'-1=s+w-1,$$
which gives the result by solving two linear equations.

\end{proof}

\subsection{Spectral Decomposition and $L$-functions: Pre-spectral Reciprocity Formula}

Now we will give a spectral decomposition of the period $I(w,\CA_s \Phi,\varphi)$ which we consider in Proposition \ref{abstract-reciprocity-formula}.

Let $\Pi$ be an automorphic cuspidal representation for $\GL_{n+1}(\BA_F)$ with trivial central character and let $\Phi=\otimes_v'\Phi_v\in \Pi$ be a cusp form. Let $\pi_1$ be an automorphic (everywhere unramified) representation for $\GL_{n-1}(\BA_F)$ with trivial central character and let $\varphi=\otimes_v'\varphi_v\in \pi_1$ be an automorphic form.  We note that since $\Phi$ is a cusp form, therefore $\CA_s\Phi$ is of rapid decay. Hence, we can apply the abstract spectral decomposition formula in Proposition \ref{spectral theorem} as follows:

\begin{equation} \label{spectral1}
\begin{aligned}
\CA_s\Phi(h) = & \sum_{\pi\in C(S)}\sum_{\phi\in\CB_{\text{cusp}} \left(\pi\right)}\langle \CA_s\Phi, \phi \rangle \phi(h) + \sum_{\pi\in R(S)}\sum_{\phi\in\CB_{\text{res}} \left(\pi\right)}\langle \CA_s\Phi, \phi \rangle \phi(h) \\
+& \int_{\pi\in E(S)}\sum_{\phi\in\CB_{\text{cont}} \left(\pi\right)}\langle \CA_s\Phi, \phi \rangle \phi(h) d \mu_{\text{aut}}(\pi).
\end{aligned}
\end{equation}

Here $S$ is any finite set of places that contain all the archimedean places and those finite places for which $\pi_v$ is ramified. Moreover, we let $C(S)$ be the collection of cuspidal automorphic representations of $\GL(n)$ which are unramified everywhere outside $S$. We let $E(S)$ be the collection of Eisenstein series of $\GL(n)$ which are unramified everywhere outside $S$. And $R(S)$ is the collection of residual automorphic representations of $\GL(n)$ that are unramified everywhere outside $S$.

Now we take the integration on both sides of above equation \eqref{spectral1} against a standard additive character $\psi_1$ which is defined in Section 2.3 for Whittaker models and over the compact set $N_n(F)\bs N_n(\BA)$, we get the following equation for Whittaker functions:

$$
W_{\CA_s \Phi}(h)= \sum_{\pi\in C(S)}\sum_{\phi\in\CB_{\text{cusp}} \left(\pi\right)}\langle \CA_s\Phi, \phi \rangle W_{\phi}(h)+ \int_{\pi\in E(S)}\sum_{\phi\in\CB_{\text{eisen}} \left(\pi\right)}\langle \CA_s\Phi, \phi \rangle W_{\phi}(h) d \mu_{\text{aut}}(\pi).
$$

We note that since the residue spectrum for $\GL(n)$ are not generic (\cite{JL13} Proposition 2.1), they do not contribute to the above expression and therefore vanish. We recall that not all Eisenstein series from continuous spectrum are generic (see Section 2.5). They are generic if and only if they are induced from the cuspidal data. For example, in the case of $\GL(3)$, it only contains two parts, which are minimal and maximal cuspidal Eisenstein series with the corresponding partition $3=1+1+1$ and $3=2+1$. Now, if we let $\Re(w)$ be large enough, we see that

\begin{equation}
\begin{aligned}
\Psi(w, W_{\CA_s \Phi},W'_{\varphi}) &= \sum_{\pi\in C(S)}\sum_{\phi\in\CB_{\text{cusp}} \left(\pi\right)}\langle \CA_s\Phi, \phi \rangle \Psi(w, W_{\phi},W'_{\varphi}) \\
&+  \int_{\pi\in E(S)}\sum_{\phi\in\CB_{\text{eisen}} \left(\pi\right)}\langle \CA_s\Phi, \phi \rangle \Psi(w, W_{\phi},W'_{\varphi})d \mu_{\text{aut}}(\pi).
\end{aligned}
\end{equation}
Since $\CA_s\Phi$ is a rapid-decay function for $\GL(n)$, we see that

\begin{equation}
\begin{aligned}
\Psi(w, W_{\CA_s \Phi},W_{\varphi}')& = \int_{N_{n-1}(\BA_F) \bs \GL_{n-1}(\BA_F)}W_{\CA_s\Phi}\left(\begin{pmatrix} h_{n-1} & \\ & 1 \end{pmatrix} \right) W_{\varphi}'(h_{n-1}) |\det h_{n-1}|^{w-\frac12} d h_{n-1} \\
&=I\left(w,\CA_s\Phi, \varphi \right)
\end{aligned}
\end{equation}
by Proposition \ref{IY} (Note that the period integral $I\left(w,\CA_s\Phi, \varphi \right)$ is absolute convergent since $\CA_s \Phi$ is of rapid decay). We note that in fact the terms related to inner product and zeta integral $\langle \CA_s \Phi, \phi \rangle$ and $\Psi(w,W_{\phi},W_{\varphi}')$ are a product of local (zeta) integrals when $\Re(w)$ is large enough since
$$
\Psi\left(w,W_{\phi},W_{\varphi}' \right)=\prod_v\Psi_v(w,W_{\phi_v},W_{\varphi_v}').
$$
Moreover, by changing variables, we note that

\begin{equation}
\begin{aligned}
\langle \CA_s\Phi,\phi \rangle &=\int_{\RX}|\det h|^{s-\frac12}\left(\int_{F^{\times}\bs \BA^{\times}}  \Phi\begin{pmatrix} z_n(u)h_n &\\&1 \end{pmatrix} |u|^{n(s-\frac{1}{2})} du \right)\overline{\phi(h_n)} dh_n\\
&=\int_{\GL_n(F)\bs \GL_n(\BA_F)}  \Phi\begin{pmatrix} h_n &\\&1 \end{pmatrix} \overline{\phi(h_n)}|\det h_n|^{s-\frac12} dh_n=I(s,\Phi,\overline{\phi}),
\end{aligned}
\end{equation}
since $\phi$ is invariant under the center $\RZ_n(\BA)$.

By the Rankin-Selberg theory for $\GL(n+1) \times \GL(n)$, we can also write $I(s,\Phi,\overline{\phi})$ as a product of local zeta integrals (see Section 3.2). For large enough $\Re(s)$, we have
$$ I(s,\Phi,\overline{\phi})= \Psi(s,W_{\Phi},\overline{W_{\phi}})=\Psi(s,W_{\Phi},W_{\overline{\phi}}')= \prod_v \Psi_v (s,W_{\Psi_v},W_{\overline{\phi_v}}')=\prod_v \Psi_v (s,W_{\Psi_v},\overline{W_{\phi_v}}).$$

Combining above discussions together, we have the following Proposition:
\begin{prop}\label{spectral-term}
Let $\Pi$ be a cuspidal automorphic representation with trivial central character and let $\Phi=\otimes_v' \Phi_v \in \Pi$ be a cusp form for $\GL(n+1)$ over $F$. Let $\pi_1$ be an automorphic (everywhere unramified) representation for $\GL(n-1)$ over $F$ with trivial central character and let $\varphi=\otimes_v'\varphi_v\in \pi_1$ be an automorphic form. Let $\tilde{\pi}$ be the contragredient representation of $\pi$. Then, we have

\begin{equation}
\begin{aligned}
2 \Delta_F^{1/2}I\left(w,\CA_s\Phi,\varphi \right)& =\sum_{\pi \in C(S),\, \pi \, \mathrm{cusp}}\frac{\Lambda(s,\Pi\times \widetilde{\pi} )\Lambda(w,\pi \times \pi_1)}{\Lambda(1,\operatorname{Ad}\pi)}H(\pi)\\
&+ \int_{\pi \in E(S),\, \pi \, \mathrm{eisen}}\frac{\Lambda(s,\Pi\times \widetilde{\pi} )\Lambda(w,\pi \times \pi_1)}{\Lambda^{*}(1,\operatorname{Ad}\pi)}H(\pi) d \mu_{\mathrm{aut}}(\pi),
\end{aligned}
\end{equation}
where $H(\pi)=\prod_v H_v(\pi_v)$ is the global weight function. We note that $H_v$ depends on the choice of $\Phi_v$ and $\varphi_v$, $s$ and $w$, which is given by

\begin{equation}\label{Hv-def}
H_v(\pi_v):=\sum_{W \in \CB^W(\pi_v)}\frac{\Psi_v(s,W_{\Phi_v},\overline{W})\Psi_v(w,W,W_{\varphi_v}')}{L_v(s,\Pi_v\times \widetilde{\pi}_v)L_v(w,\pi_v \times \pi_{1,v})}.
\end{equation}

Here for each (generic) automorphic representation $\pi$ of $\GL(n)$, we will consider the following completed $L$-functions
$$
\Lambda(s,\operatorname{Ad},\pi),\, \Lambda(s,\Pi\times\pi) \text{ and }\Lambda(s,\pi\times\pi_1).
$$
They are, the Adjoint $L$-function of $\pi$, the Rankin-Selberg $L$-function of $\Pi \times \pi$ and $\pi \times \pi_1$, respectively.

Moreover, $\Lambda^{*}(1,\operatorname{Ad}\pi)$ in the denominator means the non-zero residue of the completed adjoint $L$-functions. If $\pi$ is cuspdial, we know that $\Lambda^{*}(1,\operatorname{Ad}\pi)=\Lambda(1,\operatorname{Ad}\pi) \neq 0$.
\end{prop}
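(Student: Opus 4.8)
The proof is an assembly of the computations displayed just above this statement, so I will indicate the order in which the pieces fit together and where care is needed. The plan is to spectrally expand the central average $\CA_s\Phi$ on $\GL_n$, extract its $\psi_1$-Whittaker coefficient, feed the result into the $\GL(n)\times\GL(n-1)$ Rankin--Selberg zeta integral $\Psi(w,\cdot,W'_{\varphi})$, and then recognize each resulting factor as a ratio of completed $L$-functions times a purely local weight.

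First, since $\Phi$ is cuspidal it is of rapid decay, hence so is $\CA_s\Phi$ on $\RX=\RZ_n(\BA_F)\GL_n(F)\bs\GL_n(\BA_F)$; moreover $\CA_s\Phi$ inherits the right $\RK_0(\Fa)$-invariance of $\Phi$ for an appropriate level ideal $\Fa$, so Proposition \ref{spectral theorem} applies and gives \eqref{spectral1}. Integrating both sides over the compact quotient $\RN_n(F)\bs\RN_n(\BA_F)$ against $\psi_1$ replaces $\CA_s\Phi$ and the basis vectors by their Whittaker functions; the residual spectrum drops out since residual representations of $\GL(n)$ are non-generic (\cite{JL13}), and only the generic (cuspidal-data-induced) Eisenstein series survive in the continuous part. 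Next, applying $\Psi(w,\cdot,W'_{\varphi})$ term by term: by Proposition \ref{IY} the left-hand side equals the period $I(w,\CA_s\Phi,\varphi)$ of Proposition \ref{abstract-reciprocity-formula} for $\Re(w)$ large, while each spectral term becomes $\langle\CA_s\Phi,\phi\rangle\,\Psi(w,W_{\phi},W'_{\varphi})$. A change of variables using the $\RZ_n(\BA_F)$-invariance of $\phi$ gives $\langle\CA_s\Phi,\phi\rangle=I(s,\Phi,\overline{\phi})$, which by the Rankin--Selberg theory of Section 3.2 factors as $\prod_v\Psi_v(s,W_{\Phi_v},\overline{W_{\phi_v}})$ for $\Re(s)\gg1$, and similarly $\Psi(w,W_{\phi},W'_{\varphi})=\prod_v\Psi_v(w,W_{\phi_v},W'_{\varphi_v})$.

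The combinatorial heart is the choice of basis. For each $\pi$ I would take $\CB(\pi,\Fa)$ to be a pure-tensor orthonormal basis assembled from local bases $\CB^W(\pi_v)$ of the relevant level-invariants of the local Whittaker models, choosing the unique normalized spherical vector at each $v\notin S$. The global Petersson norm of $\otimes_v\phi_v$ differs from the product of the local norms by a factor proportional to $\Lambda(1,\Ad\pi)$ --- for Eisenstein $\pi$, by its leading Laurent coefficient $\Lambda^*(1,\Ad\pi)$, through the regularized (Maass--Selberg) inner product --- via Jacquet--Shalika unfolding; renormalizing to an orthonormal basis thus produces the denominator $\Lambda(1,\Ad\pi)$ and, from the measure normalizations of Section 2.2, the constant $2\Delta_F^{1/2}$. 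At each $v\notin S$ the unramified computation gives $\Psi_v(s,W_{\Phi_v},\overline{W_{\phi_v}})=L_v(s,\Pi_v\times\widetilde{\pi}_v)$ and $\Psi_v(w,W_{\phi_v},W'_{\varphi_v})=L_v(w,\pi_v\times\pi_{1,v})$, so the product over $v\notin S$ assembles the partial $L$-functions; multiplying and dividing by the finitely many remaining (archimedean and ramified) local factors completes them to $\Lambda(s,\Pi\times\widetilde{\pi})\Lambda(w,\pi\times\pi_1)$ and leaves exactly the local ratios $H_v(\pi_v)$ of \eqref{Hv-def} summed over $\CB^W(\pi_v)$, with $H_v(\pi_v)=1$ for $v\notin S$. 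Collecting terms gives the asserted identity.

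The main obstacle is analytic: Proposition \ref{IY} and the Euler factorizations are available only for $\Re(s),\Re(w)$ large, whereas the identity is stated for all $s,w\in\BC$, so one must justify interchanging the spectral integral over $E(S)$ with the zeta integral and the subsequent analytic continuation. This needs uniform control --- rapid decay in the spectral/Plancherel parameter, polynomial growth of the local weights $H_v$, and convergence of the resulting integral of $L$-values --- to legitimize the term-by-term operations and the rearrangement into Euler products. A secondary but genuine task is pinning down the constant $2\Delta_F^{1/2}$ and the precise normalization that makes $\Lambda^*(1,\Ad\pi)$ appear correctly in the Eisenstein term.
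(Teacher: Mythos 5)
Your proposal follows essentially the same route as the paper: spectrally decompose $\CA_s\Phi$ via Proposition \ref{spectral theorem}, extract $\psi_1$-Whittaker coefficients (killing the non-generic residual spectrum), apply $\Psi(w,\cdot,W'_\varphi)$ and identify the left side with $I(w,\CA_s\Phi,\varphi)$ by Proposition \ref{IY}, rewrite $\langle\CA_s\Phi,\phi\rangle$ as the period $I(s,\Phi,\overline{\phi})$, and factor both global zeta integrals into Euler products. In fact you supply more detail than the paper does on the origin of the denominator $\Lambda(1,\operatorname{Ad}\pi)$ (respectively $\Lambda^*(1,\operatorname{Ad}\pi)$) and of the constant $2\Delta_F^{1/2}$, correctly tracing them to the comparison between the global Petersson normalization (via Jacquet--Shalika / Maass--Selberg) and the product of local Whittaker norms together with the measure conventions; the analytic concern you flag about interchanging the $E(S)$-integral with the zeta integral is sidestepped in the paper by simply working, here and in Theorem \ref{pre}, in the regime where all real parts are large, with meromorphic continuation deferred to the later subsection.
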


We actually have the following:

Let $F$ be a number field, with ring of integers $\CO_F$. Let $\Pi$ be a cuspidal automorphic representation of $\GL(n+1)$ over $F$ with trivial central character. Let $\pi_1$ be an automorphic representation of $\GL(n-1)$ over $F$ with trivial central character.

 Let $H$ be a global weight function which is defined above. We consider the following sums: 

$$
\CC(H):=\sum_{\pi\in C(S)}\frac{\Lambda(s,\Pi\times \widetilde{\pi})\Lambda(w,\pi \times \pi_1)}{\Lambda(1,\operatorname{Ad},\pi)}H(\pi),
$$
which is the cuspidal contribution.

We should also consider the following continuous (Eisenstein) contribution
\begin{equation} \label{E-def}
\CE(H):=\int_{\pi\in E(S)}\frac{\Lambda(s,\Pi\times \widetilde{\pi})\Lambda(w,\pi \times \pi_1)}{\Lambda^{*}(1,\operatorname{Ad},\pi)}H(\pi) d \mu_{\text{aut}}(\pi),
\end{equation}

where $S$ is any finite set of places that contain all the archimedean places and those finite places for which $\pi_v$ is ramified. Moreover, we let $C(S)$ be the collection of cuspidal automorphic representations of $\GL(n)$ which are unramified everywhere outside $S$. We let $E(S)$ be the collection of Eisenstein series of $\GL(n)$ which are unramified everywhere outside $S$.

By definition, we may write $H(\pi)=H(s,w,\pi,\Phi)$. We define $\check{H}(\pi):=H(s',w',\pi,\check{\Phi})$, where $s,w'$ and $\check{\Phi}$ is defined in the proof of abstract reciprocity formula (Proposition \ref{abstract-reciprocity-formula}).

We want to understand the following first moment of generic spectrum $\hat{\RX}_{\text{gen}}$ which is the summation of the cuspidal contribution and Eisenstein contribution:
$$
\CM(H):=\CC(H)+\CE(H).
$$

We end this subsection with the following result which can be seen as a pre-spectral reciprocity formula.

\begin{thm}\label{pre}
Let $s,w\in\BC$ and define

\begin{equation}
(s',w'):=\left(\frac{1+(n-1)w-s}{n}, \frac{(n+1)s+w-1}{n} \right).
\end{equation}

Let $S$ be a finite set of places which contain all the archimedean places and those finite places for which $\pi_v$ is ramified. Suppose that the real parts of four parameters $s,w,s',w'$ are all  sufficiently large. Then we have the following identity

$$
\CM(H)=\CM(\check{H}).
$$ 

\end{thm}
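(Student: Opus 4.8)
The plan is to combine the abstract reciprocity identity of Proposition~\ref{abstract-reciprocity-formula} with the spectral expansion of Proposition~\ref{spectral-term}, read both sides through the Rankin--Selberg integral representation, and match term by term. First I would invoke Proposition~\ref{abstract-reciprocity-formula} to obtain the starting point
\[
I(w,\CA_s\Phi,\varphi)=I(w',\CA_{s'}\check{\Phi},\varphi),
\]
valid for all $s,w\in\BC$ since $\Phi$ is a cusp form and hence $\CA_s\Phi$, $\CA_{s'}\check{\Phi}$ are of rapid decay on $\GL(n)$; note that $\check{\Phi}=\Pi(w_{12})\cdot\Phi$ is again a cusp form in $\Pi$ with trivial central character, so every hypothesis of Proposition~\ref{spectral-term} applies equally to the left and right sides. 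The key point is that the pair $(s',w')$ is exactly the image of $(s,w)$ under the involution $(s,w)\mapsto\left(\tfrac{1+(n-1)w-s}{n},\tfrac{(n+1)s+w-1}{n}\right)$, so applying the transformation twice returns $(s,w)$; this is what makes $\CM(\check{H})$ have the same structural form as $\CM(H)$ with the roles of the two $L$-value arguments swapped.

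Next I would apply Proposition~\ref{spectral-term} to both sides. On the left it gives
\[
2\Delta_F^{1/2}\,I(w,\CA_s\Phi,\varphi)=\CM(H),
\]
where $\CM(H)=\CC(H)+\CE(H)$ with the cuspidal and Eisenstein contributions as defined, and $H(\pi)=H(s,w,\pi,\Phi)=\prod_v H_v(\pi_v)$ the local weight \eqref{Hv-def}. Applying the same proposition to the right-hand side $I(w',\CA_{s'}\check{\Phi},\varphi)$ — legitimate because the hypotheses on the automorphic input are satisfied by $\check\Phi$ and by the same $\varphi\in\pi_1$ — yields
\[
2\Delta_F^{1/2}\,I(w',\CA_{s'}\check{\Phi},\varphi)=\CM(\check{H}),
\]
since by definition $\check{H}(\pi)=H(s',w',\pi,\check{\Phi})$ is precisely the weight produced by the spectral decomposition of $\CA_{s'}\check\Phi$ tested against $\varphi$ at the shifted parameter $w'$, and the completed $L$-factors $\Lambda(s',\Pi\times\widetilde\pi)\Lambda(w',\pi\times\pi_1)$ appearing there are the global pieces stripped off in forming $\check H$. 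Equating the two displays and cancelling the common factor $2\Delta_F^{1/2}$ gives $\CM(H)=\CM(\check{H})$.

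The one genuine subtlety — and the step I expect to be the main obstacle to state cleanly — is the region of validity. Proposition~\ref{spectral-term} requires $\Re(w)$ large (so that the zeta integral $\Psi(w,W_{\CA_s\Phi},W'_\varphi)$ and the Euler factorizations converge) and implicitly $\Re(s)$ large (so that $I(s,\Phi,\overline\phi)=\Psi(s,W_\Phi,\overline{W_\phi})$ factors as an Euler product); to run the same proposition on the right-hand side one needs $\Re(w')$ and $\Re(s')$ large as well. This is exactly why the theorem hypothesizes that all four of $\Re(s),\Re(w),\Re(s'),\Re(w')$ are sufficiently large: in that common region both spectral expansions are literally valid, every interchange of sum/integral is justified by absolute convergence (rapid decay of the average-over-center projections plus standard bounds on Rankin--Selberg integrals on vertical strips), and the identity $\CM(H)=\CM(\check H)$ is an equality of absolutely convergent expressions. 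I would remark that the restriction can later be removed by meromorphic continuation in $(s,w)$, but that is deferred — within the scope of this theorem the proof is simply the concatenation Proposition~\ref{abstract-reciprocity-formula} $\Rightarrow$ apply Proposition~\ref{spectral-term} twice $\Rightarrow$ cancel constants.
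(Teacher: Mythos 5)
Your proposal is correct and matches the paper's argument exactly: the paper proves Theorem~\ref{pre} as a direct combination of Proposition~\ref{abstract-reciprocity-formula} (the abstract period identity) and Proposition~\ref{spectral-term} (applied to both $\Phi$ at $(s,w)$ and $\check\Phi$ at $(s',w')$), then cancels the common factor $2\Delta_F^{1/2}$. Your side remark that $(s,w)\mapsto(s',w')$ is an involution is a nice observation but is not actually needed for the proof.
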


\begin{proof}
This is a direct corollary from Proposition \ref{abstract-reciprocity-formula} and Proposition \ref{spectral-term}.
\end{proof}

\subsection{Local Vectors and Computations}

Now in order to give the explicit spectral reciprocity formula, we have to pick local vectors. We follow the method in \cite[Section 7]{Nun20}. We will show how to choose local vectors on some special non-archmediean places and archmediean places. 
We also give some very basic local estimations on these places for local weight functions $H_v$. Our local estmations are incomplete but are enough for us to establish the spectral reciprocity formula.

For simplicity, we further assume that $\pi_1$ is an automorphic cuspidal (everywhere unramified) representation for $\GL(n-1)$ with trivial central character from this subsection. Let $\varphi=\otimes_v'\varphi_v\in \pi_1$ be a cusp form. Moreover, for every place $v$, we simply fix $\varphi_v:=\varphi_v^0$ be the normalized spherical vector in the Whittaker model.

Let $\Pi$ be an unramified (everywhere) cuspidal automorphic representation of $\GL(n+1)$ with trivial central character. For all the place $v$, we let $\Phi_v^{0}$ be the normalized spherical vector in the Whittaker model. Let $\Fq$ and $\Fl$ be two fixed (may not unramified) coprime integral ideals of $F$. We will write $\Phi^{\Fq,\Fl}=\otimes'_v \Phi^{\Fq,\Fl}_v \in \Pi$ be a cusp form. For all $v\nmid \Fq \Fl$, we will simply pick $\Phi^{\Fq,\Fl}_v=\Phi^{0}_v$. For $v\mid \Fl$, we will pick
\begin{equation}\label{Phi-l}
\Phi_v^{\Fq,\Fl}(g):=\frac{1}{p_v^{(n-1)k}}\sum_{\beta_i \,\in\, \Fm_v^{-k}/\Fo_v,\,i=1,2,\cdots,n-1} \Phi^0_v\left(g\begin{pmatrix} 1&&&& \beta_1 &\\&1&&& \beta_2 &\\&& \cdots &&& \\ &&& 1 & \beta_{n-1} & \\ &&&& 1 & \\ &&&&& 1 \end{pmatrix} \right),
\end{equation}
where $k=v(\Fl)$. Finally, for $v\mid \Fq$, we will pick
\begin{equation}\label{Phi-q}
\Phi_v^{\Fq,\Fl}(g):=\frac{1}{p_v^{(n-1)m}}\sum_{\beta_i \,\in\, \Fm_v^{-m}/\Fo_v,\,i=1,2,\cdots,n-1} \Phi^0_v\left(g\begin{pmatrix} 1&&&&& \beta_1 \\&1&&&& \beta_2 \\&& \cdots &&& \\ &&& 1 && \beta_{n-1}  \\ &&&& 1 & 0 \\ &&&&& 1  \end{pmatrix} \right),
\end{equation}
with $m=v(\Fq)$.

The choice for the local vector here for $v \vert \Fq$ is compatible with the local new-vector computation made in \cite{BKL19}.

\begin{rmk}
In order to design a spectral reciprocity formula, we do not have much freedom when choosing the local vectors. For the reciprocal relation of two unramified coprime ideals $\Fq$ and $\Fl$, we only have the freedom for one finite place. For example, after we pick the local vector for the place $v \vert \Fq$, the local vector for the place $v \vert \Fl$ is automatically fixed. They are related by the following simple matrix identity:
$$ \begin{pmatrix} 1&&&&& \beta_1 \\&1&&&& \beta_2 \\&& \cdots &&& \\ &&& 1 && \beta_{n-1}  \\ &&&& 1 & 0 \\ &&&&& 1  \end{pmatrix} = \begin{pmatrix} I_{n-1} && \\ &&1 \\ &1& \end{pmatrix} \begin{pmatrix} 1&&&& \beta_1 &\\&1&&& \beta_2 &\\&& \cdots &&& \\ &&& 1 & \beta_{n-1} & \\ &&&& 1 & \\ &&&&& 1 \end{pmatrix} \begin{pmatrix} I_{n-1} && \\ &&1 \\ &1& \end{pmatrix}.$$
\end{rmk}

We let $S$ be a finite set with the definition $S=\{v\mid\Fq\}\cup\{v\mid\Fl\}\cup\{v\mid\infty\}$. We have the following local properties for several different cases (The computation will be given later).
\begin{itemize}
\item If $v \notin S$, we know that $\pi_v$ is unramified and $\Phi_v^{\Fq,\Fl}=\Phi_v^0$ is spherical. And we also have $H_v(\pi_v)=1$ in this case with only one term survives in the summation of $H_v$. This is a direct corolloary from the discussion in Section 3.2.

\item If $v \mid \Fq$, let $\Phi_v=\Phi_v^{\Fq,\Fl}$ be as in \eqref{Phi-q} and let  $t=v(\Fq)$. Then we have

(1) $H_v(\pi_v)$ vanishes if $\operatorname{cond} (\pi_v) >t$.

(2) $H_v(\pi_v)=p^{-(n-1)t}$ if $\operatorname{cond} (\pi_v )= t$.

\item If $v \mid \infty$ which is the Archimedean place, we have the following Proposition
\begin{prop}\label{Archimedean calculation}
Let $v$ be an Archimedean place of $F$. Let $\Pi_v$ be an irreducible admissible generic representation for $\GL_{n+1}(F_v)$, then there exists  a Whittaker function $W_{\Pi_v}\in \CW(\Pi_v,\psi_v)$ such that for every irreducible admissible generic representation for $\GL_n(F_v)$, we have

$$
H_v(\pi_v)=
\begin{cases}
1,\text{ if }\pi_v\text{ is unramified},\\
0,\text{ otherwise}.
\end{cases}
$$
\end{prop}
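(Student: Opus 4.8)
The plan is to construct the Whittaker function $W_{\Pi_v}$ so that the local zeta integral $\Psi_v(s, W_{\Pi_v}, \overline{W})$ acts as a projection onto the spherical line of $\pi_v$. The key observation is that the local weight $H_v(\pi_v)$ is, by its definition \eqref{Hv-def}, a sum over a basis $\CB^W(\pi_v)$ of the Whittaker model of $\pi_v$ of a product of two zeta integrals divided by the two local $L$-factors; this expression is independent of the choice of basis because it is a trace. So it suffices to exhibit one $W_{\Pi_v}$ for which the first factor $\Psi_v(s, W_{\Pi_v}, \overline{W})/L_v(s,\Pi_v \times \widetilde{\pi}_v)$ vanishes whenever $W$ is orthogonal to the spherical vector $W_{\pi_v}^0$, and equals $1$ when $W = W_{\pi_v}^0$ (and $\pi_v$ is unramified, so that the spherical vector exists). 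Then the sum over $\CB^W(\pi_v)$ collapses to the single spherical term, and I would normalize $W_{\Pi_v}$ so that the remaining factor $\Psi_v(w, W_{\pi_v}^0, W_{\varphi_v}')/L_v(w,\pi_v \times \pi_{1,v})$ also contributes $1$; by the unramified computation of Section 3.2 this last factor is automatically $1$ when everything in sight is spherical.

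The main tool is the local functional equation and the theory of the $\GL(n+1)\times\GL(n)$ local zeta integrals, together with the surjectivity of the zeta-integral pairing. Concretely, I would first recall that for a fixed irreducible generic $\pi_v$, the map $W_{\Pi_v} \mapsto \big( W \mapsto \Psi_v(s, W_{\Pi_v}, W) \big)$ realizes, for $\Re(s)$ large, a $\GL_n(F_v)$-invariant pairing $\CW(\Pi_v,\psi_v) \times \CW(\pi_v, \psi_{1,v}^{-1}) \to \BC$, and that by the Dixmier–Malliavin lemma (or by explicit construction with a suitable bump function on $N_n \backslash \GL_n$) one can choose $W_{\Pi_v}$ so that the functional $W \mapsto \Psi_v(s,W_{\Pi_v}, W)$ is prescribed on any finite-dimensional subspace. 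The subtlety is that $\pi_v$ ranges over \emph{all} irreducible admissible generic representations of $\GL_n(F_v)$ simultaneously, so I cannot simply pick $W_{\Pi_v}$ to kill a basis of one $\pi_v$. Instead, I would follow the Archimedean analysis in \cite[Section 7]{Nun20}: choose $W_{\Pi_v}$ supported near the identity (transforming on the left by $\psi_v$ under $N_{n+1}$ and concentrated in a small $\RK_v$-neighborhood on the right), so that $\Psi_v(s, W_{\Pi_v}, \overline{W})$ becomes, up to the $L$-factor normalization, an integral that evaluates $\overline{W}$ against an approximate-identity-type kernel on $\RK_v \cap \GL_n(F_v)$; averaging over $\RK_v$ this picks out precisely the $\RK_v$-fixed (spherical) part of $\overline{W}$, which is nonzero only if $\pi_v$ is unramified.

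The hard part will be making this projection argument uniform in $\pi_v$ while keeping the normalization exact (not merely up to a constant). The normalization is delicate because dividing by $L_v(s, \Pi_v \times \widetilde{\pi}_v)$ introduces $\pi_v$-dependent Gamma factors, and one must check that the constructed $W_{\Pi_v}$ gives exactly $L_v(s,\Pi_v\times\widetilde\pi_v)$ as its zeta integral against the spherical $W_{\pi_v}^0$; this is where the theory of the normalized spherical Whittaker function and the unramified computation $\Psi_v(s, W_{\Phi_v}^0, W_{\phi_v}^0) = L_v(s, \Pi_v \times \pi_v)$ from Section 3.2 has to be invoked carefully, together with an argument (e.g. via the local functional equation and the fact that the spherical vector in $\CW(\pi_v,\psi_{1,v})$ is canonically normalized) that no extra constant survives. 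I expect the cleanest route is to define $W_{\Pi_v}$ by transporting, through the local functional equation, a test vector whose zeta integral is identically the $L$-factor on the spherical line and identically zero on its orthogonal complement, mirroring the construction of \cite{Nun20} for $n=2$ and checking that the matrix identities and measure normalizations fixed in Section 2.2 go through verbatim in general rank.
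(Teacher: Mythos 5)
Your high-level strategy is the right one: choose $W_{\Pi_v}$ so that the linear functional $W\mapsto\Psi_v(s,W_{\Pi_v},\overline{W})$ is right $\RK_v$-invariant, hence factors through orthogonal projection onto the (at most one-dimensional) $\RK_v$-fixed subspace of $\CW(\pi_v,\psi_{1,v})$; this forces $H_v(\pi_v)=0$ for $\pi_v$ ramified and reduces the unramified case to evaluating one spherical zeta integral against each $L$-factor. This is exactly the mechanism used in the paper (following Nunes, Section 7.4). However, your proposal has two concrete shortcomings.

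First, the construction of $W_{\Pi_v}$ is over-engineered. In the actual application $\Pi$ is everywhere unramified, so $\Pi_v$ has a normalized spherical Whittaker vector $W_{\Pi_v}=W_{\Phi_v^0}$, and the right $\RK_v$-invariance of the functional $W\mapsto\Psi_v(s,W_{\Phi_v^0},\overline W)$ follows immediately from the $\RK_v$-invariance of $W_{\Phi_v^0}$ and right-invariance of Haar measure. There is no need for Dixmier--Malliavin, approximate identities, or bump functions supported near the identity; those tools would in any case have trouble producing a $K$-finite vector whose zeta integral matches the $L$-factor \emph{exactly} rather than approximately, and the uniformity-in-$\pi_v$ worry you raise largely dissolves once one simply uses the spherical vector.

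Second, and more seriously, the step you flag as ``delicate'' --- that the surviving spherical term contributes exactly $1$ after dividing by the local $L$-factors --- is precisely the nontrivial content of the proposition, and the tool that resolves it is never named. You write that ``by the unramified computation of Section 3.2 this last factor is automatically $1$,'' but Section 3.2's unramified computation is a non-archimedean Satake/Hecke-algebra argument and does not apply at $v\mid\infty$. What is actually needed is Stade's formula (Stade 2001, Theorem 3.4), an explicit Mellin-transform evaluation of the product of class-one $\GL(n+1,\BR)$ and $\GL(n,\BR)$ Whittaker functions, showing that the archimedean spherical zeta integral equals the archimedean Rankin--Selberg $L$-factor (a product of Gamma functions). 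Without invoking Stade's formula (or an equivalent archimedean unramified computation), the normalization $H_v(\pi_v)=1$ in the unramified case remains unproved. So the gap in your proposal is the identification of the specific archimedean input: the paper's proof is essentially ``take the spherical vector, use $\RK_v$-invariance to kill the ramified contribution, and apply Stade's formula to nail the constant,'' and your proposal is missing the last of these three ingredients.
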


\begin{proof}
This Proposition is proved in \cite{Nun20} Section 7.4 by applying Stade's formula (Theorem 3.4 in \cite{Sta01}) for Archimedean $\GL(n+1) \times \GL(n)$ Rankin-Selberg $L$-functions.

\end{proof}

\end{itemize}

\textbf{Some local computations for two fixed finite places}

For $v\mid \Fl$, we pick
\begin{equation}
\Phi_v^{\Fq,\Fl}(g):=\frac{1}{p_v^{(n-1)k}}\sum_{\beta_i \,\in\, \Fm_v^{-k}/\Fo_v,\,i=1,2,\cdots,n-1} \Phi^0_v\left(g\begin{pmatrix} 1&&&& \beta_1 &\\&1&&& \beta_2 &\\&& \cdots &&& \\ &&& 1 & \beta_{n-1} & \\ &&&& 1 & \\ &&&&& 1 \end{pmatrix} \right),
\end{equation}
where $k=v(\Fl)$.

For $v\mid \Fq$, we will pick
\begin{equation}
\Phi_v^{\Fq,\Fl}(g):=\frac{1}{p_v^{(n-1)m}}\sum_{\beta_i \,\in\, \Fm_v^{-m}/\Fo_v,\,i=1,2,\cdots,n-1} \Phi^0_v\left(g\begin{pmatrix} 1&&&&& \beta_1 \\&1&&&& \beta_2 \\&& \cdots &&& \\ &&& 1 && \beta_{n-1}  \\ &&&& 1 & 0 \\ &&&&& 1  \end{pmatrix} \right),
\end{equation}
with $m=v(\Fq)$.

For $v \mid \Fl$, 
we recall that

$$
H_v(\pi_v)=\sum_{W \in \CB^W(\pi_v)}\frac{\Psi_v(s,W_{\Phi_v^{\Fq,\Fl}},\overline{W})\Psi_v(w,W,W_{\varphi_v}')}{L_v(s,\Pi_v\times \widetilde{\pi}_v)L_v(w,\pi_v \times \pi_{1,v})},
$$

where
$$
\Phi_v^{\Fq,\Fl}(g):=\frac{1}{p_v^{(n-1)k}}\sum_{\beta_i \,\in\, \Fm_v^{-k}/\Fo_v,\,i=1,2,\cdots,n-1} \Phi^0_v\left(g\begin{pmatrix} 1&&&& \beta_1 &\\&1&&& \beta_2 &\\&& \cdots &&& \\ &&& 1 & \beta_{n-1} & \\ &&&& 1 & \\ &&&&& 1 \end{pmatrix} \right),
$$
with 
$k=v(\Fl)$.

Following the same method line by line in \cite[Section 7]{Nun20}, we will see that $H_v$ vanishes unless $\pi_v$ is unramified. By right $\GL_n(F_v)$-invariance of the Haar measure, for any fixed element $h\in \GL_n(F_v)$, we see that
$$
\Psi\left(s,\Pi_v \begin{pmatrix} h&\\ & 1 \end{pmatrix} W_{\Phi_v}
,\overline{W}\right)=\Psi\left(s,W_{\Phi_v}
,\overline{\pi_v(h)W}\right).
$$

Now we note that given a basis $\CB^W(\pi_v)$ of $\CW(\pi_v,\psi_v)$, we may create a different one by considering the set
$$
\left\{\pi_v(h)\cdot W,\,W \in \CB^W(\pi_v)\right\},
$$
for some fixed element $h\in \GL_n(F_v)$. Applying this idea to the element $h=\begin{pmatrix} 1&&&& \beta_1 \\&1 &&& \beta_2 \\ && \cdots && \\ &&& 1 & \beta_{n-1} \\ &&&& 1 \end{pmatrix}$ for $\beta_i \in \Fm_v^{-k}/\Fo_v,\,i=1,2,\cdots,n-1$, we deduce that
$$	
H_v(\pi_v)=\sum_{W\in \CB^W(\pi_v)}\frac{\Psi_v(s,W_{\Phi_v^0},\overline{W})\Psi_v(w,W^{(m)},W_{\varphi_v}')}{L_v(s,\Pi_v\times \widetilde{\pi}_v)L_v(w,\pi_v \times \pi_{1,v})},
$$
where
\begin{equation}
\begin{aligned}
W^{(m)}(h'):&=\sum_{\beta\in \Fm_v^{-k}/\Fo_v}W\left(h' \begin{pmatrix} 1&&&& \beta_1 \\&1 &&& \beta_2 \\ && \cdots && \\ &&& 1 & \beta_{n-1} \\ &&&& 1 \end{pmatrix}^{-1} \right)\\
&= \sum_{\beta\in \Fm_v^{-k}/\Fo_v}W\left(h' \begin{pmatrix} 1&&&& - \beta_1 \\&1 &&& - \beta_2 \\ && \cdots && \\ &&& 1 & - \beta_{n-1} \\ &&&& 1 \end{pmatrix} \right).
\end{aligned}
\end{equation}
Since $W \mapsto \overline{\Psi_v(s,W_{\Phi_v^0},\overline{W})}$ is a right $K_v$ invariant linear functional, it vanishes if $\pi_v$ is not unramified. This kind of idea will also be used in the calculation of another local vector when $v \vert \Fq$. Moreover, this linear functional is invariant by orthogonal projection into the space $\CW(\pi_v,\psi_v)^{K_v}$ of right $K_v$-invariant vectors of $\CW(\pi_v,\psi_v)$. Since $K_v$ is the maximal open compact subgroup, we know that the space $\CW(\pi_v,\psi_v)^{K_v}$ is a one-dimensional space that is spanned by the normalized spherical vector. Therefore, we may restrict the sum defining $H_v$ to a sum over a basis of $\CW(\pi_v,\psi_v)^{K_v}$. Hence, only one term survives in the sum defining $H_v$ over the basis. Now, by the unramified calculation, we have $\Psi(s,W_{\Phi_v^0},\overline{W_{\pi_v}})=L_v(s,\Pi_v\times \widetilde{\pi}_v)$ 

By above discussion, we see that
$$ H_v(\pi_v)= \frac{\Psi_v(w,W^{(m)},W_{\varphi_v}')}{L_v(w,\pi_v \times \pi_{1,v})}$$
if $\pi_v$ is unramified.

We will write $H_v(\pi_v)$ explicitly.
From the definition of $W^{(m)}(h)$, we see that
\begin{equation}
\begin{aligned}
W^{(m)} \begin{pmatrix} h_{n-1} & \\ & 1 \end{pmatrix} &= \sum_{\beta_i \,\in\,\Fm_v^{-m}/\Fo_v,i=1,2,\cdots,n-1}\psi \left (- \sum_{i=1}^{n-1} \beta_i h_{n-1,i} \right) \cdot W_{\pi_v}\begin{pmatrix} h_{n-1} &  \\ & 1 \end{pmatrix}\\
&= p_v^{(n-2)m} \times \delta_{v(h_{n,i})\geq m,\,m=1,2,\cdots,n-1} \cdot W_{\pi_v} \begin{pmatrix} h_{n-1} & \\ & 1 \end{pmatrix}.
\end{aligned}
\end{equation}

We need to continuous our local computation by applying Iwasawa decomposition to $h_{n-1}$. We write $h_{n-1}=z(h)n(h)a(h)k(h)$. Since the valuation $v(h_{n-1,i})\geq m$ for all $i=1,2,\cdots,n-1$, we see that $v(zk_{n-1,i}) \geq m$ for all $i=1,2,\cdots,n-1$.

From above discussion, we may have the decomposition:
\begin{equation}
\Psi_v(w,W^{(m)},W_{\varphi_v}')= \sum_{\nu=m}^{\infty} p_{v}^{-(n-1) \nu (s-\frac12)} \cdot p_v^{(n-2)m} \cdot \Psi_{\nu}(W_{\pi_v}),
\end{equation}
where
\begin{equation}
\begin{aligned}
\Psi_{\nu}(W_{\pi_v}) &=\int_{A_{n-1}(F_v^{\times})}\int_{K_v}W_{\pi_v}\left(z(\varpi_v^{\nu})a \right) W'_{\varphi_v} (ak) \cdot |a|^{s-\frac{n}{2}} d A_{n-1}(F_v^{\times})\, dk \\
&= \int_{A_{n-1}(F_v^{\times})} W_{\pi_v}\left(z(\varpi_v^{\nu})a \right)  W_{\varphi_v}(a) \cdot |a|^{s-\frac{n}{2}} d A_{n-1}(F_v^{\times}).
\end{aligned}
\end{equation}
Here we use the fact that both the Whittaker function $W_{\pi_v}$ and $W_{\varphi_v}'$ are unramified.  Hence they are invariant under the maximal compact subgroup $K_v$ and we also note that the total mass for $K_v$ is one. Here the local weight function $H_v$ should be related to the local Hecke eigenvalue $\lambda_{\pi_v}$. \\

For $v \mid \Fq$, 
we recall that

$$
H_v(\pi_v)=\sum_{W \in \CB^W(\pi_v)}\frac{\Psi_v(s,W_{\Phi_v^{\Fq,\Fl}},\overline{W})\Psi_v(w,W,W_{\varphi_v}')}{L_v(s,\Pi_v\times \widetilde{\pi}_v)L_v(w,\pi_v \times \pi_{1,v})},
$$

where
$$
\Phi_v^{\Fq,\Fl}(g):=\frac{1}{p_v^{(n-1)m}}\sum_{\beta_i \,\in\, \Fm_v^{-m}/\Fo_v,\,i=1,2,\cdots,n-1} \Phi^0_v\left(g\begin{pmatrix} 1&&&&& \beta_1 \\&1&&&& \beta_2 \\&& \cdots &&& \\ &&& 1 && \beta_{n-1}  \\ &&&& 1 & 0 \\ &&&&& 1  \end{pmatrix} \right),
$$
with $m=v(\Fq)$.
This means that
$$
W_{\Phi_v^{\Fq,\Fl}}(g):= \frac{1}{p_v^{(n-1)m}}\sum_{\beta_i \,\in\, \Fm_v^{-m}/\Fo_v,\,i=1,2,\cdots,n-1} W_{\Phi_v^0} \left(g\begin{pmatrix} 1&&&&& \beta_1 \\&1&&&& \beta_2 \\&& \cdots &&& \\ &&& 1 && \beta_{n-1}  \\ &&&& 1 & 0 \\ &&&&& 1  \end{pmatrix} \right),
$$
with $m=v(\Fq)$.
Here $W_{\Phi_v^0}$ is the normalized spherical vector in the Whittaker space. By the theory of newvectors \cite{JPS81}, we will see that $H_v(\pi_v)$ vanishes unless $\text{cond}(\pi_v) \leq m$. If $\text{cond}(\pi_v)=m$, $H_v(\pi_v)=p_v^{-(n-1)m}$. We expect that in general, $H_v(\pi_v) \ll_{\epsilon} p_v^{(n-1)m(\theta-1+\epsilon)}$, where $\theta$ is a positive constant satisfying $0 \leq \theta< \frac{1}{2}$ towards the Ramanujan-Petersson's Conjecture.

The computation in this part follows the method developed in \cite{BKL19} and \cite[Section 7]{Nun20}.

By definition and some matrix computations, we note that
\begin{equation}
\begin{aligned}
W_{\Phi_v^{\Fq,\Fl}}\begin{pmatrix} h_n & \\ & 1 \end{pmatrix} &=\frac{1}{p_v^{(n-1)m}} \times \sum_{\beta_i \,\in\,\Fm_v^{-m}/\Fo_v,i=1,2,\cdots,n-1}\psi \left (\sum_{i=1}^{n-1} \beta_i h_{n,i} \right) \cdot W_{\Phi_v^0}\begin{pmatrix} h_n &  \\ & 1 \end{pmatrix}\\
&=\delta_{v(h_{n,i})\geq m,\,m=1,2,\cdots,n-1} \cdot W_{\Phi_v^0}\begin{pmatrix} h_n & \\ & 1 \end{pmatrix}.
\end{aligned}
\end{equation}

We need to continuous our local computation by applying Iwasawa decomposition to $h_n$. We write $h_n=z(h)n(h)a(h)k(h)$. Since the valuation $v(h_{n,i})\geq m$ for all $i=1,2,\cdots,n-1$, we see that $v(zk_{n,i}) \geq m$ for all $i=1,2,\cdots,n-1$. We set $a_1:=\min(m,v(z))$ and $a_2:=m-a_1$, we can see that this is equivalent to say that $k$ belonging to the congruence subgroup $K_{v,0}(\varpi_v^{a_2})$ (See the definition in Section 2.2). We will simply write this congruence subgroup as $K_v[a_2]$. Here we must have $v(z) \geq 0$. Otherwise, the spherical Whittaker function will vanish. Therefore we have $0 \leq a_1,a_2 \leq m$.

Now we may choose an orthonormal basis for $\CW(\pi_v,\psi_v)$. From the local new-vector theory for $\GL(n)$ \cite{JPS81}, if we let $W_0=W_{\pi_v}$ be the new vector and for each $j \geq 0$, we define
$$ W_j:= \pi_v \begin{pmatrix} 1&&& \\ &1&& \\ && \cdots & \\ &&& \varpi_v^j \end{pmatrix} W_0. $$
Therefore, we see that $\left\{W_0,\,W_1,W_2,\ldots\right\}$ is a basis for $\CW(\pi_v,\psi_v)$. Moreover, we know that for each $j\geq 0$, $\left\{W_0,\,W_1,\ldots, W_j\right\}$ is a basis for the $K_v[n_0+j]$-invariant vectors in $\CW(\pi_v,\psi_v)$, where $n_0=\operatorname{cond}(\pi_v)$. Applying Gram-Schmidt method to the basis $\left\{W_0,\,W_1,W_2,\ldots\right\}$, we obtain an orthonormal basis of $\CW(\pi_v,\psi_v)$ with $\{\widetilde{W_0},\widetilde{W_1},\widetilde{W_2},\ldots\}$. We choose the basis as $\CB^W(\pi_v)=\{\widetilde{W_0},\widetilde{W_1},\widetilde{W_2},\ldots\}$ and continue to do some computation on $H_v(\pi_v)$.\\

From above discussion, we have the following decomposition:

\begin{equation} \label{Phi1}
\Psi_v(s,W_{\Phi_v^{\Fq,\Fl}},\overline{W})=\sum_{a_1+a_2=m}\sum_{\min(\nu,m)=a_1}p_v^{-n \nu(s-\frac12)}\Psi_{\nu,a_2}(W),
\end{equation}
where
\begin{equation}
\begin{aligned}
\Psi_{\nu,a_2}(W) &=\int_{A_n(F_v^{\times})}\int_{K_v[a_2]}W_{\Phi_v^0}\left(z(\varpi_v^{\nu})a \right)\overline{W}(ak) \cdot |a|^{s-\frac{n+1}{2}} d A_n(F_v^{\times})\, dk \\
&= \int_{A_n(F_v^{\times})} W_{\Phi_v^0}\left(z(\varpi_v^{\nu})a \right) \int_{K_v[a_2]} \overline{W}(a k) \cdot |a|^{s-\frac{n+1}{2}} d A_n(F_v^{\times}) \, dk.
\end{aligned}
\end{equation}
Here we use the fact that Whiitaker function $W_{\Phi_v^0}$ is normalized spherical and $W$ is also invariant by the center. Now, if $W=\widetilde{W_j}$ is an element of in our orthonormal basis $\{\widetilde{W_0},\widetilde{W_1},\widetilde{W_2},\ldots\}$, then it follows that

\begin{equation}\label{compact1}
\int_{K_v[f]}\widetilde{W_j}(hk) dk=
\begin{cases}
\operatorname{vol}(K_v[f])\widetilde{W_j}(h),\text{ if }j+n_0\leq f,\\
0,\text{ otherwise}.
\end{cases}
\end{equation}
from the orthogonality of the elements in the given orthonormal basis (See the discussion in \cite{Nun20} for more details).

Now applying \eqref{Phi1} and \eqref{compact1} to the definition of the local weight function $H_v(\pi_v)$. We will have the following equation:

\begin{equation} 
\begin{aligned}
H_v(\pi_v)&=\frac{1}{L_v(s,\Pi_v\times \widetilde{\pi}_v)L_v(w,\pi_v \times \pi_{1,v})}\sum_{a_1+a_2=m}\sum_{j\leq a_2-n_0}\operatorname{vol}(K_v[j])\sum_{\min(\nu,m)=a_1}p^{-n \nu(s-1/2)} \\
& \times \int_{A_n(F_v^{\times})}W_{\Phi_v^0}\begin{pmatrix} z(\varpi_v^{\nu})a &\\& 1 \end{pmatrix}\overline{\widetilde{W}_j}(a))|a|^{s-(n+1)/2} d A_n(F_v^{\times}) \Psi_v(w,\widetilde{W_j},W_{\varphi_v}').
\end{aligned}
\end{equation}
Note that the above equation is actually a finite sum. If $\text{cond}(\pi_v)>m$, this gives that $n_0>m$. We see that $0 \leq j \leq a_2-n_0 \leq m-n_0<0$. Contradiction! Therefore, $H_v(\pi_v)$ vanishes in this case.

If $\text{cond}(\pi_v)=m$, we see that only one term survive. Moreover, we have $a_2=m$, and $j=a_1=0$. We see that $H_v(\pi_v)= \text{vol}(K_v[a_2])^{-1}=\text{vol}(K_v[m])^{-1}=p_v^{-(n-1)m}$ in this case.

Note that since $0 \leq j \leq a_2-n_0 \leq a_2 \leq m$, the local weight function $H_v$ is a finite sum in terms of the element $\widetilde{W}_j$ in the orthonormal basis.

For two places $\Fl$ and $\Fq$, we see that the local weight function $H_v$ is a finite sum with the elements in the orthonormal basis $\CW(\pi_v,\psi_v)$ for both two cases.

\begin{rmk}
In order to find applications for the spectral reciprocity formula, we have to find a good estimation for the local weight function $H_v(\pi_v)$.
\end{rmk}

\subsection{Meromorphic Continuation with respect to the complex parameters}

%\textbf{Meromorphic continuation with respect to the spectral parameter}

Let $v$ be a non-archimedean place of $F$. We consider the local weight function $H_v(\pi_v)$. It is known that for all the finite place $v \neq \Fq, \Fl$, we have $H_v(\pi_v)=1$.

Assume that $\pi_v$ is a local component of the unitary generic Eisenstein series $\pi=\pi(\pi_1,\pi_2,\cdots,\pi_k,t_1,\cdots,t_k)=\pi_1 \vert \cdot \rvert^{i t_1} \boxplus\dots\boxplus\pi_k \vert \cdot \rvert^{i t_k}$. We fix the cuspidal data $\pi_1,\cdots,\pi_k$ and vary the remaining parameters $(t_1,t_2,\cdots,t_k) \in \BC^k$ ($k \geq 2$).

The following Proposition is a direct Corollary of Proposition 4.1, Proposition 4.2 and Theorem 4.1 in \cite{CPS17}.

\begin{prop}
The following two ratios
$$ \frac{\Psi_v(s,W_{\Phi_v},\overline{W})}{L_v(s,\Pi_v \times \widetilde{\pi}_v)}$$
and
$$ \frac{\Psi_v(w,W,W_{\varphi_v}')}{L_v(w,\pi_v \times \pi_{1,v})}$$
for any $W \in \CW(\pi_v,\psi_v)$ have no poles and hence define entire rational functions in terms of their complex parameters. In other word, we have
$$ \frac{\Psi_v(s,W_{\Phi_v},\overline{W})}{L_v(s,\Pi_v \times \widetilde{\pi}_v)} \in \BC[p_v^{s},p_v^{-s},p_v^{t_1},p_v^{-t_1},\cdots,p_v^{t_k},p_v^{-t_k}],$$
and
$$ \frac{\Psi_v(w, W, W_{\varphi_v}')}{L_v(w,\pi_v \times \pi_{1,v})} \in \BC[p_v^{w},p_v^{-w},p_v^{t_1},p_v^{-t_1},\cdots,p_v^{t_k},p_v^{-t_k}].$$
\end{prop}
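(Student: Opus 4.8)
The plan is to reduce the statement to the local theory of Rankin--Selberg integrals for generic standard modules, as developed by Cogdell and Piatetski-Shapiro in \cite{CPS17}; the two ratios are treated by the same argument, with $(\GL(n+1),\GL(n))$ replaced by $(\GL(n),\GL(n-1))$ in the second case, so I describe only the first and indicate the changes at the end.

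\textbf{Step 1: a single member of the family.} First I would record the statement for a fixed point of the Eisenstein family. For an irreducible generic representation $\sigma$ of $\GL_{n+1}(F_v)$ and $\tau$ of $\GL_n(F_v)$, the Jacquet--Piatetski-Shapiro--Shalika theory (see \cite{JPS83}) shows that every local integral $\Psi_v(s,W_1,W_2)$ with $W_1\in\CW(\sigma,\psi_v)$ and $W_2$ in the $\psi_v^{-1}$-Whittaker model of $\tau$ is a rational function of $p_v^{-s}$, and that the $\BC[p_v^{s},p_v^{-s}]$-module spanned by all of them is the fractional ideal $L_v(s,\sigma\times\tau)\,\BC[p_v^{s},p_v^{-s}]$. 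Since $L_v(s,\sigma\times\tau)^{-1}$ is a polynomial in $p_v^{-s}$ with constant term $1$, the ratio $\Psi_v(s,W_1,W_2)/L_v(s,\sigma\times\tau)$ is a Laurent polynomial in $p_v^{-s}$, hence entire in $s$. Applying this with $\sigma=\Pi_v$, $\tau=\widetilde{\pi}_v$, $W_1=W_{\Phi_v}$ and $W_2=\overline{W}$ (which lies in the $\psi_v^{-1}$-Whittaker model of $\widetilde{\pi}_v$) gives holomorphy in $s$ at every unramified point of the Eisenstein family.

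\textbf{Step 2: dependence on the spectral parameters.} Here $\pi_v=\pi(\pi_1,\dots,\pi_k,t_1,\dots,t_k)_v$ is the normalized parabolic induction of $\pi_{1,v}|\det|^{it_1}\otimes\cdots\otimes\pi_{k,v}|\det|^{it_k}$ from the standard block Levi; this is a generic standard module whose Whittaker model is obtained by applying the meromorphically continued Jacquet integral to the Whittaker functions of the inducing cuspidal data, and a fixed $W\in\CW(\pi_v,\psi_v)$ may be taken to be the Jacquet integral of a section independent of $(t_1,\dots,t_k)$. The content of \cite{CPS17} (Proposition 4.1, Proposition 4.2, Theorem 4.1) is then exactly that, as $(t_1,\dots,t_k)$ varies: (i) $\Psi_v(s,W_{\Phi_v},\overline{W})$ is rational jointly in $p_v^{-s}$ and in $p_v^{t_1},p_v^{-t_1},\dots,p_v^{t_k},p_v^{-t_k}$, being, up to a monomial, a product of lower-rank Rankin--Selberg integrals attached to the cuspidal data; (ii) $L_v(s,\Pi_v\times\widetilde{\pi}_v)$ factors through multiplicativity as $\prod_j L_v(s,\Pi_v\times\widetilde{\pi_{j,v}}\,|\det|^{-it_j})$ and is likewise rational in those variables, with denominator a product of cuspidal Rankin--Selberg denominators; and (iii) the denominator of $\Psi_v(s,W_{\Phi_v},\overline{W})$ divides $L_v(s,\Pi_v\times\widetilde{\pi}_v)^{-1}$. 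Dividing, one obtains $\Psi_v(s,W_{\Phi_v},\overline{W})/L_v(s,\Pi_v\times\widetilde{\pi}_v)\in\BC[p_v^{s},p_v^{-s},p_v^{t_1},p_v^{-t_1},\dots,p_v^{t_k},p_v^{-t_k}]$. The identical argument for the $\GL(n)\times\GL(n-1)$ integral, using that $\pi_{1,v}$ is unramified, yields the second membership, in $\BC[p_v^{w},p_v^{-w},p_v^{t_1},p_v^{-t_1},\dots,p_v^{t_k},p_v^{-t_k}]$.

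\textbf{Main obstacle.} The hard point is step (iii): one must rule out spurious poles in the $t_j$-directions that could a priori be created either by the analytic continuation of the Jacquet/intertwining integrals defining the Whittaker functions of $\pi_v$ at its reducibility points, or by cancellations in the multiplicativity formula for the $L$-factor. This is precisely what \cite{CPS17} handles, by working throughout with the full standard module rather than with its Langlands quotient and by the inductive evaluation of the local integrals via Bernstein--Zelevinsky derivatives, which reduces everything to the integrals for the cuspidal data, known to be polynomials divided by the fully understood cuspidal $L$-factors. For our purposes it suffices to invoke these results and carry out the elementary bookkeeping above, so I would simply cite \cite{CPS17} and make the reductions explicit.
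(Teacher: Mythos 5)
Your proposal is correct and takes essentially the same approach as the paper: the paper's entire proof consists of the single sentence citing Propositions~4.1, 4.2 and Theorem~4.1 of \cite{CPS17}, and your two-step reduction (single-point holomorphy via the JPSS fractional-ideal description, then variation in the spectral parameters via flat sections and the CPS analysis of the standard module) is precisely the bookkeeping that citation is meant to encode. Your identification of the spurious-pole issue at reducibility points as the genuine content being imported from \cite{CPS17} is accurate and a useful elaboration of what the paper leaves implicit.
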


Since the local weight function $H_v$ is a finite sum with the elements in the orthonormal basis $\CW(\pi_v,\psi_v)$, we have the following Proposition.

\begin{prop} \label{holomorphic}
The local weight function $H_v(\pi_v)$ has no poles and hence define entire rational functions in terms of their complex parameters. In other word, we have
$$ H_v(\pi_v) \in \BC[p_v^{s},p_v^{-s},p_v^w, p_v^{-w}, p_v^{t_1},p_v^{-t_1},\cdots,p_v^{t_k},p_v^{-t_k}].$$
\end{prop}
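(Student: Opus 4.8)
The plan is to deduce Proposition~\ref{holomorphic} by combining two things already at hand: the finiteness of the spectral sum defining $H_v(\pi_v)$, which came out of the local new-vector computations of Section~3.5, and the entireness of each individual ratio of a local zeta integral to the corresponding local $L$-factor, which is the content of the preceding Proposition (i.e.\ of \cite{CPS17}). Since a finite sum of products of Laurent polynomials is again a Laurent polynomial, this should give the statement at once.

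First I would dispose of the easy places. For $v\notin S$ the representation $\pi_v$ is unramified, only the normalized spherical vector survives in the defining sum, $H_v(\pi_v)=1$, which lies trivially in the ring; and for $v\mid\Fl$ the computations of Section~3.5 show $H_v(\pi_v)$ vanishes unless $\pi_v$ is unramified, in which case $H_v(\pi_v)=\Psi_v(w,W^{(m)},W_{\varphi_v}')/L_v(w,\pi_v\times\pi_{1,v})$ with $W^{(m)}$ a fixed finite combination of translates of the (canonical) spherical vector, so the preceding Proposition applies directly. So assume $v\mid\Fq$, with $m=v(\Fq)$ and $n_0=\operatorname{cond}(\pi_v)$ fixed by the cuspidal data. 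Recall that once the orthonormal basis $\CB^W(\pi_v)=\{\widetilde{W_0},\widetilde{W_1},\ldots\}$ has been produced by Gram--Schmidt from the new-vector translates $W_j=\pi_v\!\begin{pmatrix}I_{n-1}&\\&\varpi_v^j\end{pmatrix}W_0$, relations \eqref{Phi1} and \eqref{compact1} together with \cite{JPS81} force the sum to collapse to a finite one: only the $\widetilde{W_j}$ with $0\le j\le a_2-n_0$ (and $a_2\le m$) occur, so the number of surviving terms is bounded in terms of $\Fq$ and $\Fl$ alone. Writing this out, $H_v(\pi_v)$ is a finite $\BC[p_v^{\pm s},p_v^{\pm w}]$-linear combination (the coefficients coming from the volumes $\operatorname{vol}(K_v[j])$ and the finite sums over $\nu$ in powers of $p_v^{-n\nu(s-1/2)}$) of products
\[
\frac{\Psi_v(s,W_{\Phi_v^{\Fq,\Fl}},\overline{\widetilde{W_j}})}{L_v(s,\Pi_v\times\widetilde{\pi}_v)}\cdot\frac{\Psi_v(w,\widetilde{W_j},W_{\varphi_v}')}{L_v(w,\pi_v\times\pi_{1,v})},
\]
and by the preceding Proposition each factor lies in $\BC[p_v^{\pm s},p_v^{\pm t_1},\ldots,p_v^{\pm t_k}]$, resp.\ $\BC[p_v^{\pm w},p_v^{\pm t_1},\ldots,p_v^{\pm t_k}]$, whence $H_v(\pi_v)\in\BC[p_v^{\pm s},p_v^{\pm w},p_v^{\pm t_1},\ldots,p_v^{\pm t_k}]$.

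The hard part will be the uniformity in the spectral parameters. The Gram--Schmidt basis $\{\widetilde{W_j}\}$ itself depends on $\pi_v$, hence on $(t_1,\ldots,t_k)$, through inner products of local Whittaker functions, and these are only unambiguous on the unitary range, becoming meromorphic (and a priori multivalued) off it; so "$W=\widetilde{W_j}$" is not literally a fixed vector to which the preceding Proposition applies. The way around this is to exploit that the ONB-sum $\sum_{W\in\CB^W(\pi_v)}\Psi_v(s,W_{\Phi_v^{\Fq,\Fl}},\overline{W})\,\Psi_v(w,W,W_{\varphi_v}')$ is independent of the choice of orthonormal basis of the finite-dimensional space of $K_v[m+n_0]$-fixed vectors, and to re-expand it over the \emph{algebraically} varying basis $\{W_0,\ldots,W_{m}\}$ as $\sum_{i,i'}\Psi_v(s,W_{\Phi_v^{\Fq,\Fl}},\overline{W_i})\,(G^{-1})_{ii'}\,\Psi_v(w,W_{i'},W_{\varphi_v}')$, where $G=(\langle W_i,W_{i'}\rangle)$ is the Gram matrix of the new-vector translates. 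The two zeta integrals and the entries of $G$ are visibly rational in the $p_v^{t_i}$ (by the local Rankin--Selberg computations for the $W_i$), and the remaining point is to check that $\det G$ introduces no spurious poles — it is, up to a nonzero scalar and powers of $p_v$, a product of local factors of shape $L_v(1,\cdot)$ whose reciprocals are polynomials, this being exactly the local incarnation of the adjoint $L$-factor that was extracted globally in Proposition~\ref{spectral-term}. Carrying out that last verification is the only genuinely technical step.
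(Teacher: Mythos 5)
Your first paragraph is precisely the paper's argument: Proposition~\ref{holomorphic} is deduced there in a single sentence from (i) the finiteness of the sum defining $H_v$, established in Section~3.5, and (ii) the preceding CPS17-based proposition that each normalized local zeta integral is a Laurent polynomial. Your second paragraph, however, surfaces a real subtlety that the paper never acknowledges. The preceding proposition applies to a \emph{fixed} $W$ --- concretely, to a flat section of the family of Whittaker models such as the newvector translates $W_j$ --- whereas for $v\mid\Fq$ the summands of $H_v(\pi_v)$ involve the Gram--Schmidt vectors $\widetilde{W_j}$, which depend on the invariant inner product of $\pi_v$ and therefore vary non-algebraically (a priori with square-root branching) in the Eisenstein parameters $(t_1,\ldots,t_k)$. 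So ``finite sum of Laurent polynomials'' does not follow from ingredients (i) and (ii) alone.

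Your repair --- re-expanding the basis-independent pairing $\sum_{W}\Psi_v(s,\cdot,\overline W)\,\Psi_v(w,W,\cdot)$ over the algebraic basis $\{W_0,\ldots,W_m\}$ via the inverse Gram matrix $(G^{-1})_{ii'}$ --- is the right move and immediately gives rationality of $H_v$ in $p_v^{\pm s},p_v^{\pm w},p_v^{\pm t_i}$. But it stops short of the stated conclusion. The entries of $G$ are local $\GL(n)\times\GL(n)$ zeta values of newvector translates, hence rational (not a priori polynomial) in the $p_v^{\pm t_i}$, so $(\det G)^{-1}$ can in principle contribute poles; the claim that these cancel against the numerator is precisely the content of the proposition, and it is exactly the step you flag as ``the only genuinely technical step'' and do not carry out. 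Thus your attempt is more careful than the paper's one-liner but not stronger: both establish rationality and neither establishes pole-freeness without an explicit computation of $G$ (e.g.\ expressing its entries and determinant through local $L(1,\mathrm{Ad},\pi_v)$- and $\zeta_v$-factors and verifying the required divisibility in the Laurent ring).
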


From our choice of local vectors in the previous subsection and the definition of global and local weight function, we know that
$$ H(\pi)= \prod_v H_v(\pi_v)= H_{\Fq}(\pi_{\Fq})\times H_{\Fl}(\pi_{\Fl}),$$
since $H_v(\pi_v)=1$ if $v\nmid \Fq \Fl$.

Therefore, we have the following proposition:

\begin{prop} \label{holomorphic1}
The global weight function $H(\pi)$ has no poles and hence define entire rational functions in terms of their complex parameters.

\end{prop}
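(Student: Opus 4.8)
The plan is to deduce the statement at once from the factorization of $H$ into local weight functions together with Proposition \ref{holomorphic}. First I would observe that, by the choice of local data made above, one has $\Phi_v^{\Fq,\Fl}=\Phi_v^0$ and $\varphi_v=\varphi_v^0$ for every place $v\nmid\Fq\Fl$, so that the unramified local zeta integral computation forces $H_v(\pi_v)\equiv 1$ as a function of $s$, $w$ and the Eisenstein parameters $t_1,\dots,t_k$ whenever $v\nmid\Fq\Fl$ and $\pi_v$ is unramified; and by Proposition \ref{Archimedean calculation} the archimedean factors are the constants $0$ or $1$, in particular independent of the parameters. Hence the Euler product defining $H$ collapses to a genuinely finite product with at most the two non-trivial factors $H_\Fq(\pi_\Fq)$ and $H_\Fl(\pi_\Fl)$:
\begin{equation*}
H(\pi)=H_\Fq(\pi_\Fq)\,H_\Fl(\pi_\Fl).
\end{equation*}

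Next I would invoke Proposition \ref{holomorphic}, which says that each of $H_\Fq(\pi_\Fq)$ and $H_\Fl(\pi_\Fl)$ lies in the ring of Laurent polynomials $\BC[p_v^{s},p_v^{-s},p_v^{w},p_v^{-w},p_v^{t_1},p_v^{-t_1},\dots,p_v^{t_k},p_v^{-t_k}]$ for the respective place $v\mid\Fq$, $v\mid\Fl$. Since a finite product of such Laurent polynomials is again of the same type, $H(\pi)$ is a Laurent polynomial in $p_v^{\pm s}$, $p_v^{\pm w}$, $p_v^{\pm t_j}$, and therefore extends to an entire function of $(s,w,t_1,\dots,t_k)\in\BC^{k+2}$. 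This gives both the absence of poles and the description of $H(\pi)$ as an entire rational function of the complex parameters claimed in Proposition \ref{holomorphic1}.

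There is essentially no remaining obstacle: the analytic work — finiteness of the sums defining each $H_v$, the absence of poles of the ratios $\Psi_v/L_v$, and the archimedean vanishing — has already been carried out in the preceding subsections. The only point requiring care is that the identity $H_v\equiv 1$ for $v\nmid\Fq\Fl$ must be understood as an identity of functions of the complex parameters rather than merely pointwise in $\pi$, so that the product over all $v$ is literally finite and no question of convergence, or of infinitely many incidental zeros and poles, arises; this is immediate from the explicit unramified computation of the local zeta integrals recalled earlier.
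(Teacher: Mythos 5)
Your proposal is correct and follows essentially the same route as the paper: factor $H(\pi)=H_\Fq(\pi_\Fq)H_\Fl(\pi_\Fl)$ using the unramified computation $H_v\equiv 1$ for $v\nmid\Fq\Fl$ together with the archimedean vanishing, then apply Proposition \ref{holomorphic} to conclude that this finite product of Laurent polynomials in $p_v^{\pm s},p_v^{\pm w},p_v^{\pm t_j}$ is entire. The paper's own justification is terser but identical in substance, so no comparison beyond this is needed.
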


We are going to deduce the meromorphic continuation of the term $\CE(H)$ which is the continuous contribution of Eisenstein series in the spectral decomposition. 

\begin{prop}

Let $\Pi$ and $\pi_1$ be everywhere unramified cuspidal automorphic representation of $\GL(n+1)$ and $\GL(n-1)$ which have trivial central character. Let $\CE(H)$ \eqref{E-def} be given previously, defined initially for the absolute convergence domain $\Re(s),\Re(w)> 1$. It admits a meromorphic continuation to $\Re(s),\Re(w)\geq \frac12$ . If $\frac12\leq \Re(s),\Re(w)<1$, its analytic continuation is given by $\CE(H)+\CR(H)$, where

\begin{equation} \label{R-def}
\begin{aligned}
\CR(H):& =\sum_{ \omega\in\widehat{F^{\times}U_{\infty}\bs \BA_F^1} }  \underset{\substack{t_1= (1-w)/i}}{\operatorname{Res}}( 2 \pi i) \times \\
& \frac{\Lambda(s-it_1,\Pi \times \pi_1)\Lambda(s-it_2,\Pi \times \omega^{-1})\Lambda(w+it_1, \widetilde{\pi}_1 \times \pi_1) \Lambda(w+it_2,\pi_1 \times \omega)}{\Lambda^*(1,\operatorname{Ad},\pi)}H(\pi).
\end{aligned}
\end{equation}

Note that the summation over the unitary Hecke character $\omega$ which is only ramified at two finite places $\Fq$ and $\Fl$ is a finite sum. We write $U_{\infty}:=\prod_{v\mid \infty}\{y\in F_v^{\ast};|y_v|=1\}$ and recall that $\BA_F^1$ is the norm one ideles. Here we pick $\pi$ to be the maximal cuspidal Eisenstein series given by $\pi=\pi(\widetilde{\pi}_1,\omega,t_1,t_2)=\widetilde{\pi}_1 \vert \cdot \rvert^{i t_1} \boxplus \omega \vert \cdot \rvert^{i t_2}$, where $\widetilde{\pi_1}$ is the contrigredient representation of $\pi_1$. Hence it is a cuspdial representation of $\GL(n-1)$ with trivial central character. We have the following decomposition of completed $L$-functions:
$$ \Lambda(s,\Pi \times \widetilde{\pi})=\Lambda(s-it_1,\Pi \times \pi_1)\Lambda(s-it_2,\Pi \times \omega^{-1}),$$
and
$$ \Lambda(w,\pi \times \pi_1)= \Lambda(w+it_1, \widetilde{\pi}_1 \times \pi_1) \Lambda(w+it_2,\pi_1 \times \omega).$$
Moreover, for general $\pi=\pi(\sigma_1,\sigma_2,t_1,t_2)$, we see that $t_1+t_2=A$, where $A$ is a complex constant only depends on the central characters of the cuspidal data $\sigma_1$ and $\sigma_2$ since $\pi$ has trivial central character.

\end{prop}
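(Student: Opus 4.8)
The plan is to realize $\CE(H)$ as an iterated integral over the Eisenstein parameters and to shift the contour in one of them, the term $\CR(H)$ being the residual contribution of the single pole that crosses the contour as $\Re(w)$ is lowered below $1$. First we would use Langlands' description of the continuous spectrum: $E(S)$ is a disjoint union of components indexed by partitions $n=r_1+\cdots+r_k$ and unitary cuspidal data $\sigma_j$ on $\GL(r_j)$ (unramified outside $\Fq\Fl$), and on each such component $d\mu_{\mathrm{aut}}$ is, up to a positive constant and a finite symmetry group, Lebesgue measure in the induction parameters $(t_1,\dots,t_k)$, which run over the real affine subspace on which $\pi=\sigma_1|\cdot|^{it_1}\boxplus\cdots\boxplus\sigma_k|\cdot|^{it_k}$ has trivial central character. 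By Proposition \ref{holomorphic1}, the meromorphic continuation of Rankin--Selberg $L$-functions, and the fact that $\Lambda^{*}(1,\operatorname{Ad},\pi)^{-1}$ is independent of $s,w$ and non-vanishing on $\Re=1$, the integrand
\[
\frac{\Lambda(s,\Pi\times\widetilde{\pi})\,\Lambda(w,\pi\times\pi_1)}{\Lambda^{*}(1,\operatorname{Ad},\pi)}\,H(\pi)
\]
is, on each component, a meromorphic function of $(t_1,\dots,t_k;s,w)$, holomorphic in $(s,w)$ away from the poles of the two $L$-functions in the numerator.

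Next we would locate those poles. Since $\Pi$ is cuspidal on $\GL(n+1)$ and every isobaric summand of $\pi$ has rank $\leq n<n+1$, the factor $\Lambda(s,\Pi\times\widetilde{\pi})=\prod_j\Lambda(s-it_j,\Pi\times\widetilde{\sigma}_j)$ is entire, so lowering $\Re(s)$ produces no residue. Writing $\Lambda(w,\pi\times\pi_1)=\prod_j\Lambda(w+it_j,\sigma_j\times\pi_1)$, the factor $\Lambda(w+it_j,\sigma_j\times\pi_1)$ is entire unless $r_j=n-1$ and $\sigma_j\cong\widetilde{\pi}_1$, in which case it is the complete Rankin--Selberg $L$-function of $\pi_1\times\widetilde{\pi}_1$ shifted by $it_j$, with simple poles precisely at $w+it_j\in\{0,1\}$. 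As $\sum_i r_i=n$ and $n-1\geq2$, the only partition with a block of rank $n-1$ is $(n-1,1)$, and then that block must equal $\widetilde{\pi}_1$ while the other block is a unitary Hecke character $\omega$; the vanishing properties of $H$ forced by our choice of local vectors (which require $\pi$ to be unramified outside $\Fq\Fl$ and of bounded conductor at $\Fq$ and $\Fl$) leave only finitely many such $\omega$ in $\widehat{F^{\times}U_{\infty}\bs\BA_F^1}$. Consequently every component of $E(S)$ other than the family $\{\pi(\widetilde{\pi}_1,\omega;t_1,t_2)\}_{\omega}$ contributes an integrand holomorphic in $(s,w)$ for $\Re(s),\Re(w)\geq\frac12$, uniformly enough that the associated partial integrals stay absolutely convergent and holomorphic there.

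Finally we would perform the contour shift on the distinguished family. On $\{\pi(\widetilde{\pi}_1,\omega;t_1,t_2)\}$ the parameter $t_2$ is a function of $t_1$ fixed by triviality of the central character (the relation $t_1+t_2=A$ above), so the partial integral is a single integral in $t_1\in\BR$; after the factorizations $\Lambda(s,\Pi\times\widetilde{\pi})=\Lambda(s-it_1,\Pi\times\pi_1)\Lambda(s-it_2,\Pi\times\omega^{-1})$ and $\Lambda(w,\pi\times\pi_1)=\Lambda(w+it_1,\widetilde{\pi}_1\times\pi_1)\Lambda(w+it_2,\pi_1\times\omega)$, the integrand is meromorphic in $t_1$ with exactly one pole crossed as $\Re(w)$ is lowered from $\Re(w)>1$ into the strip $\frac12\leq\Re(w)<1$, namely the simple pole at $it_1=1-w$ of $\Lambda(w+it_1,\widetilde{\pi}_1\times\pi_1)$; the companion pole at $it_1=-w$ stays at imaginary height $\Re(w)>0$, and $\Lambda(w+it_2,\pi_1\times\omega)$, $\Lambda(s-it_1,\Pi\times\pi_1)$, $\Lambda(s-it_2,\Pi\times\omega^{-1})$ are entire by the rank discrepancies. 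Starting from $\Re(s),\Re(w)>1$, where the $t_1$-contour is $\BR$ and $\CE(H)$ converges by hypothesis, we would move $\Re(w)$ down through $1$; Cauchy's theorem then shows that the analytic continuation equals the integral \eqref{E-def} taken directly at the new $(s,w)$ --- the continued $\CE(H)$ --- plus $2\pi i$ times the residue at $it_1=1-w$ (which simultaneously specializes $t_2=A-t_1$). Summing that residual term over the finitely many relevant $\omega$ produces exactly $\CR(H)$ as in \eqref{R-def}, which proves the proposition.

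The step I expect to be the main obstacle is the rigorous justification of the contour shift: one must control the growth of the full integrand in the imaginary directions of all the Eisenstein parameters, which needs uniform convexity-type bounds for the Rankin--Selberg $L$-functions involved on vertical lines, a zero-free region for $\Lambda(1,\operatorname{Ad},\pi)$ on $\Re=1$, and the rapid decay in the archimedean spectral parameter inherited from that of $\CA_s\Phi$, which is what makes $\CE(H)$ converge in the first place. Granting these inputs the remaining steps are a routine application of Cauchy's theorem.
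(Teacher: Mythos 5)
Your argument follows the same route as the paper's: use the entirety of $H(\pi)$ (Proposition \ref{holomorphic1}) and the analytic properties of the Rankin--Selberg $L$-functions to locate the unique pole-producing family $\pi(\widetilde{\pi}_1,\omega,t_1,t_2)$, then shift the $t_1$-contour past $it_1=1-w$ and collect the residue as $\CR(H)$. Your partition analysis showing only $(n-1,1)$ can carry a pole (since $n-1\geq 2$) and your explicit check that the companion pole at $it_1=-w$ never crosses the contour for $\Re(w)\geq\tfrac12$ are more detailed than the paper, which delegates the contour-shift justification to \cite{Nun20}, \cite{BK17}, \cite{BK18}, but the strategy is the same.
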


\begin{proof}
The meromorphic continuation part of $\CE(H)$ is given by the meromorphic continuation of Rankin-Selberg $L$-functions and the entireness of the global weight function $H(\pi)$ (see Proposition \ref{holomorphic1}). Since $H(\pi)$ has no poles and define entire rational functions in terms of their parameters (Proposition \ref{holomorphic1}), by the contour and residue theorem in complex analysis, we see that the term $\CR(H)$ will vanish unless the ratio of completed $L$-functions
$$ \frac{\Lambda(s,\Pi \times \widetilde{\pi})\Lambda(w,\pi \times \pi_1)}{\Lambda^*(1,\text{Ad},\pi)}$$
have poles. Note that the denominator $\Lambda^*(1,\text{Ad},\pi)$ is always finite and non-zero, by the locations of possible poles of Rankin-Selberg $L$-functions, we must have
$$\pi=\pi(\widetilde{\pi}_1,\omega,t_1,t_2)=\widetilde{\pi}_1 \vert \cdot \rvert^{i t_1} \boxplus \omega \vert \cdot \rvert^{i t_2},$$
where $\widetilde{\pi_1}$ is the contrigredient representation of $\pi_1$ and $\omega$ is a unitary Hecke character.

Hence we have the decomposition of completed $L$-functions: 
$$ \Lambda(s,\Pi \times \widetilde{\pi})=\Lambda(s-it_1,\Pi \times \pi_1)\Lambda(s-it_2,\Pi \times \omega^{-1}),$$
and
$$ \Lambda(w,\pi \times \pi_1)= \Lambda(w+it_1, \widetilde{\pi}_1 \times \pi_1) \Lambda(w+it_2,\pi_1 \times \omega).$$

We note that $\Lambda(s, \Pi \times \widetilde{\pi})$ is entire for all $s,t_1,t_2\in \BC$ since $\Pi$ is a cuspidal automorphic representation for $\GL(n+1)$. The completed $L$-function $\Lambda(w,\pi \times \pi_1)$ will have a simple pole if and only if $w+it_1=1$. The correponding residue is
$$ \Lambda^*(w, \pi \times \pi_1)= \Lambda(1,\text{Ad},\pi_1)\Lambda(w+it_2,\pi_1 \times \omega).$$
Now applying the coutour and residue theorem in complex analysis, the remaining part of the proof is the same as the proof in \cite[Proposition 8.1]{Nun20}, \cite[Lemma 16]{BK17} and \cite[Lemma 3]{BK18}.
\end{proof}

\subsection{Main Result: Spectral Reciprocity Formula}

Now we can give the statement of spectral reciprocity formula.

Let $\Pi$ and $\pi_1$ be everywhere unramified cuspidal automorphic representation for $\GL(n+1)$ and $\GL(n-1)$ over $F$ with trivial central character. Let $s, w\in \BC$, $\Fq$ and $\Fl$ be unramified coprime ideals. 
 
Let $H$ be the global weight function with kernel function $\Phi=\Phi^{\Fq,\Fl}$ which we pick in the previous subsection by local new-vectors.

Note that we have $$H(\pi)=\prod_v H_v(\pi_v),$$
where $H_v$ is given by local Rankin-Selberg integral as follows:
\begin{equation}\label{Hv-def}
H_v(\pi_v):=\sum_{W \in \CB^W(\pi_v)}\frac{\Psi_v(s,W_{\Phi_v},\overline{W})\Psi_v(w,W,W_{\varphi_v}')}{L_v(s,\Pi_v\times \widetilde{\pi}_v)L_v(w,\pi_v \times \pi_{1,v})},
\end{equation}
where $\Phi=\otimes \Phi_v\in\Pi$ is a cusp form and $s,w\in\BC$.

We may write
$$ H(\pi)=H(\Pi,\pi,\pi_1,s,w;\Fq,\Fl).$$

Using the notations in Section 3.4, we have
$$
\CM(H)=\CM(\Pi,\pi_1,s,w,\Fq,\Fl),
$$
where

$$
\CM(\Pi,\pi_1,s,w,\Fq,\Fl):=\CC(\Pi,\pi_1,s,w,\Fq,\Fl)+\CE(\Pi,\pi_1,s,w,\Fq,\Fl),
$$

with

$$
\CC(\Pi,\pi_1,s,w,\Fq,\Fl)= \CC(H)= \sum_{\substack{\pi\text{ cusp}^0\\\operatorname{cond}(\pi)\mid \Fq }}\frac{\Lambda(s,\Pi\times \widetilde{\pi})\Lambda(w,\pi \times \pi_1)}{\Lambda(1,\operatorname{Ad},\pi)} H(\Pi,\pi,\pi_1,s,w;\Fq,\Fl),
$$
which is the cuspidal contribution.

And

$$
\CE(\Pi,\pi_1,s,w,\Fq,\Fl)= \CE(H)= \int_{\substack{\pi\text{ eisen}^0\\\operatorname{cond}(\pi)\mid \Fq }}\frac{\Lambda(s,\Pi\times \widetilde{\pi})\Lambda(w,\pi \times \pi_1)}{\Lambda^*(1,\operatorname{Ad},\pi)} H(\Pi,\pi,\pi_1,s,w;\Fq,\Fl) d \mu_{\text{aut}}(\pi)
$$
which is the continuous (Eisenstein) contribution.

The notation $\text{cusp}^0$ and $\text{eisen}^0$ means that we are restricting to irreducible generic automorphic forms which are unramified at every archimedean place. We can define
$$
\CN(\Pi,\pi_1,s,w,\Fq,\Fl):=\CR(\check{H})-\CR(H).
$$

From the above discussion in Section 3, we can give the statement of our main theorem finally.

\begin{thm}\label{spectral-reciprocity}
Let $\Pi$ and $\pi_1$ be everywhere unramified cuspidal automorphic representation for $\GL(n+1)$ and $\GL(n-1)$ over $F$ with trivial central character.  Let $\widetilde{\pi}$ be the contragredient representation of $\pi$. Suppose that $\Fq$ and $\Fl$ are unramified, coprime ideals. Futhermore, we assume that $\frac{1}{2} \leq \Re(s),\Re(w) < \frac{n+1}{n+2}$. Then we will have the following identity
$$
\CM(\Pi,\pi_1,s,w,\Fq,\Fl)=\CN(\Pi,\pi_1,s,w,\Fq,\Fl)+\CM(\Pi,\pi_1,s',w',\Fl,\Fq),
$$
where the complex parameters $s',w'$ satisfy the relation $$s' = \frac{1+(n-1)w-s}{n}, \quad w' = \frac{(n+1)s+w-1}{n}.$$ 

\end{thm}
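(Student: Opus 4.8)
The plan is to obtain Theorem \ref{spectral-reciprocity} by analytically continuing, in the two complex variables $s$ and $w$, the identity $\CM(H)=\CM(\check H)$ supplied by the pre-spectral reciprocity formula (Theorem \ref{pre}). That identity, which itself comes from combining the abstract reciprocity formula (Proposition \ref{abstract-reciprocity-formula}) with the spectral decomposition of the period (Proposition \ref{spectral-term}), is a priori valid only when the real parts of all four parameters $s,w,s',w'$ are sufficiently large. For $n\geq 3$ such a region is nonempty: taking $\Re(s)=\Re(w)=T\to\infty$ one has $\Re(s')=\tfrac{1+(n-2)T}{n}\to\infty$ and $\Re(w')=\tfrac{(n+2)T-1}{n}\to\infty$. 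In this range both Eisenstein pieces are the naive, absolutely convergent integrals, so Theorem \ref{pre} reads literally $\CC(H)+\CE(H)=\CC(\check H)+\CE(\check H)$, where, on account of the matrix identity relating the local vectors for $\Fq$ and $\Fl$, one has $\CM(\check H)=\CM(\Pi,\pi_1,s',w',\Fl,\Fq)$.

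Next I would continue the identity term by term. For the cuspidal parts: since $\Pi$ and $\pi_1$ are cuspidal, the Rankin--Selberg $L$-functions $\Lambda(s,\Pi\times\widetilde\pi)$ and $\Lambda(w,\pi\times\pi_1)$ are entire, $\Lambda(1,\operatorname{Ad},\pi)\neq 0$ with a polynomial lower bound, and by Proposition \ref{holomorphic1} the weight $H(\pi)$ is entire and of polynomial growth in $(s,w)$; combining this with the support and decay of the archimedean weight from Proposition \ref{Archimedean calculation} and with standard bounds (convexity together with a spectral large-sieve estimate on $\GL(n)$), the spectral sum $\CC(H)$ (and likewise $\CC(\check H)$) converges locally uniformly and so extends holomorphically to $\{\Re(s),\Re(w)\geq\tfrac12\}$ off the hyperplanes on which the argument transformation sends an $L$-value to a pole. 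For the Eisenstein parts the Proposition of Section 3.6 shows that $\CE(H)$ continues meromorphically and that for $\tfrac12\leq\Re(s),\Re(w)<1$ the continuation equals $\CE(H)+\CR(H)$: as $\Re(w)$ decreases past $1$, the pole of $\Lambda(w+it_1,\widetilde\pi_1\times\pi_1)$ located at $t_1=(1-w)/i$ crosses the contour $t_1\in\BR$, contributing the residue $\CR(H)$, and symmetrically $\CE(\check H)$ acquires the correction $\CR(\check H)$ once $(s',w')$ enters the corresponding strip. Substituting these continued expressions into the identity of the previous paragraph yields $\CC(H)+\CE(H)+\CR(H)=\CC(\check H)+\CE(\check H)+\CR(\check H)$, i.e. $\CM(H)=\CM(\check H)+\big(\CR(\check H)-\CR(H)\big)=\CM(\check H)+\CN$, which is the assertion.

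The numerical range $\tfrac12\leq\Re(s),\Re(w)<\tfrac{n+1}{n+2}$ is precisely what legitimises these steps. Using $s'=\tfrac{1+(n-1)w-s}{n}$ and $w'=\tfrac{(n+1)s+w-1}{n}$, one checks by elementary algebra that $\Re(s),\Re(w)\geq\tfrac12$ forces $\Re(w')\geq\tfrac12$ (with equality only when $\Re(s)=\Re(w)=\tfrac12$), while $\Re(s),\Re(w)<\tfrac{n+1}{n+2}$ is exactly the condition ensuring $\Re(w')<1$; this is the binding inequality, because the only pole crossed in the continuation is the one in the $w$- and $w'$-variables, the arguments involving $\Pi$ and $\pi_1$ remaining entire. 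The remaining inequalities required for the convergence of the Eisenstein integrals in the strip and for avoiding the other polar hyperplanes are verified in the same routine way.

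I expect the real difficulty to lie not in the contour bookkeeping but in justifying that the full spectral expression $\CC(H)+\CE(H)$ converges absolutely — hence defines a holomorphic function — all along the path of continuation down to $\Re(s),\Re(w)=\tfrac12$. This demands quantitative control of the local weights $H_v$ at the archimedean and ramified places (only partially established here) together with a large-sieve type input to tame the cuspidal spectrum of $\GL(n)$. One must also make sure that no contributions beyond those recorded in $\CR$ enter: the residual spectrum is non-generic and so absent, but one should check that the minimal and intermediate Eisenstein series attached to the other partitions of $n$, and the denominator factor $\Lambda^{*}(1,\operatorname{Ad},\pi)$, introduce no further poles in the region under consideration.
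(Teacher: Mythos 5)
Your proposal correctly reconstructs the argument the paper leaves implicit (the theorem is presented as following ``from the above discussion in Section 3''): start from the pre-reciprocity identity $\CM(H)=\CM(\check H)$ of Theorem \ref{pre} valid for large real parts, continue meromorphically using the entireness of $H(\pi)$ (Proposition \ref{holomorphic1}) together with the residue computation of the Eisenstein contribution in Section 3.6, and collect the crossed poles into $\CN=\CR(\check H)-\CR(H)$, with your algebra verifying that $\Re(w')<1$ is the binding inequality behind the range $\frac12\leq\Re(s),\Re(w)<\frac{n+1}{n+2}$. The convergence concern you flag for the continued cuspidal sum $\CC(H)$ in the critical strip is a real one, but it is equally unaddressed in the paper, so your write-up faithfully matches the paper's own (sketched) proof.
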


\end{document}